\documentclass[12pt,dvipsnames]{article}
\usepackage{a4} 

\usepackage{amsthm,amsmath,amssymb,amsfonts}
\usepackage{mathtools}

\usepackage{xcolor}
\usepackage{graphics}
\usepackage{graphicx}
\usepackage{subcaption}
\usepackage{float}
\usepackage{booktabs, multirow}
\usepackage{url}
\usepackage{verbatim,upquote}
\usepackage{algpseudocode}
\usepackage{hyperref}
\usepackage[section]{algorithm}
\usepackage[nameinlink]{cleveref}

\usepackage[inner=3cm,outer=3cm,top=2.5cm,bottom=2.5cm]{geometry}
\hypersetup{colorlinks,urlcolor=blue,citecolor=hotpink,linkcolor=blue}

\graphicspath{{./}}
\DeclareGraphicsExtensions{.pdf,.eps,.png,.jpg}

\newtheorem{theorem}{Theorem}
\newtheorem*{theorem*}{Theorem}
\newtheorem{lemma}[theorem]{Lemma}
\newtheorem{corollary}[theorem]{Corollary}
\newtheorem{proposition}[theorem]{Proposition}

\numberwithin{equation}{section}
\numberwithin{table}{section}
\numberwithin{figure}{section}
\numberwithin{theorem}{section}

\definecolor{ForestGreen}{cmyk}{0.91,0,0.88,0.12}
\definecolor{hotpink}{rgb}{0.9,0,0.5}
\hypersetup{colorlinks,urlcolor=blue,citecolor=hotpink,linkcolor=ForestGreen}

\def\e{\ensuremath{\mathrm{e}}}

\def\>{\mskip\medmuskip}

\DeclarePairedDelimiter\norm{\lVert}{\rVert}

\newcommand{\normF}[1]{\ensuremath{\norm{#1}_F}}

\def\normt#1{\|#1\|_{2}}

\def\C{\mathbb{C}}

\DeclareMathOperator{\diag}{diag}

\DeclareMathOperator{\spn}{span}
\DeclareMathOperator{\range}{range}

\DeclareMathOperator{\conv}{conv}

\newcommand{\bb}{\ensuremath{\overline{b}}}
\newcommand{\KX}{\ensuremath{K_{\rm X}}}
\newcommand{\BX}{\ensuremath{B_{\rm X}}}
\newcommand{\cX}{\ensuremath{c_{\rm X}}}
\newcommand{\QX}{\ensuremath{Q_{\rm X}}}
\newcommand{\MX}{\ensuremath{M_{\rm X}}}
\newcommand{\RX}{\ensuremath{R_{\rm X}}}
\newcommand{\HX}{\ensuremath{H_{\rm X}}}
\newcommand{\X}{\ensuremath{\rm X}}

\newcommand{\XK}{\ensuremath{X_{\rm K}}}
\newcommand{\QK}{\ensuremath{Q_{\rm K}}}

\newcommand{\MK}{\ensuremath{M_{\rm K}}}
\newcommand{\GK}{\ensuremath{G_{\rm K}}}

\newcommand{\PiW}{\ensuremath{\Pi_{\rm W}}}
\newcommand{\PiK}{\ensuremath{\Pi_{\rm K}}}

\newcommand{\F}{\ensuremath{\mathcal F}}

\renewenvironment{abstract}{{\small \noindent\textbf{Abstract.}}}{\smallskip}
\newenvironment{keywords}{{\small \noindent\textbf{Keywords.}}}{\smallskip}
\newenvironment{MSCcodes}{{\small \noindent\textbf{MSC codes.}}}{\smallskip}
\date{\vskip -30pt}

\title{Field-of-values analysis of augmented Krylov methods for matrix \(\varphi\)-function actions\thanks{%
		Version of July 20, 2026. 
	}
}
\author{Xiaobo Liu%
	\thanks{%
		Max Planck Institute for Dynamics of Complex Technical Systems,
		Magdeburg, 39106, Germany
		(\texttt{xliu@mpi-magdeburg.mpg.de}).
	}
	\and 
	Marcel Schweitzer%
	\thanks{%
		School of Mathematics and Natural Sciences, Bergische Universit\"{a}t Wuppertal, 42097 Wuppertal, Germany
		(\texttt{marcel@uni-wuppertal.de}).
	}
}

\makeatother

\begin{document}
\maketitle

\begin{abstract}
We revisit established Krylov subspace methods for linear combinations of matrix $\varphi$-function actions from the viewpoint of the block triangular formulation of Al-Mohy and Liu [SIAM J. Sci. Comput., 48 (2026), pp. A726--A747]. 
In algorithms such as KIOPS [J. Comput. Phys., 372 (2018), pp. 236--255], one uses an augmentation approach based on evaluating the exponential of a slightly larger matrix that contains the operant vectors in its off-diagonal block, and its field of values may therefore grow substantially with these vectors. Typical convergence estimates for Krylov subspace methods result from bounding the error of polynomial approximations for the exponential on the field of values, so that only very pessimistic convergence estimates are available for these methods, in spite of their good practical performance. 
In contrast, the larger block formulation established by Al-Mohy and Liu involves an operator whose field of values is independent of the operant vectors, leading to more favorable convergence bounds. We work out the details of how these two approaches are connected to each other, which allows us to transfer the convergence bounds from the latter to the former, thus better explaining the observed performance.
\end{abstract}

\begin{keywords}
matrix exponential, matrix $\varphi$-functions, Krylov subspace method, exponential integrators, field of values
\end{keywords}

\begin{MSCcodes}
65F60, 
15A16, 
15A60, 
65L05  
\end{MSCcodes}

\section{Introduction}\label{sect:intro}
Exponential integrators are widely used for stiff and highly oscillatory systems of ordinary differential equations
\begin{equation}\label{ode}
	\frac{\mathrm{d}y}{\mathrm{d}t} = Ay(t) + g(t,y(t)),\quad
	y(t_0)=y_0,\quad y\in\C^n,\quad t\ge t_0,
\end{equation}
where $A\in\C^{n\times n}$ is typically a large sparse Jacobian or a linearized spatial discretization of a PDE~\cite{coma02,hls98,hoos10}.  Their construction is based on the variation-of-constants formula
\begin{equation}\label{eq:variation_of_constants}
	y(t)=\e^{(t-t_0)A}y_0+
	\int_{t_0}^t \e^{(t-\tau)A}g(\tau,y(\tau))\,\mathrm d\tau.
\end{equation}
When the nonlinear term in~\eqref{eq:variation_of_constants} is expanded locally~\cite[Lem.~5.1]{miwr05}, the resulting integrators involve the exponential-like matrix functions~\cite{high:FM, hoos10}
\begin{equation}\label{def.phik}
	\varphi_k(A)=\sum_{j=0}^{\infty}\frac{A^j}{(j+k)!},
	\quad k\ge0,
\end{equation}
with $\varphi_0(A)=\e^A$. Hence a central computational kernel is the linear combination of actions
\begin{equation}\label{eq:phi-lin-comb}
	y:=\varphi_0(A)b_0+\varphi_1(A)b_1+\cdots+\varphi_s(A)b_s,
\end{equation}
where the vectors $b_j$ depend on the specific time-stepping formula that is employed. We assume throughout that $b_s\ne0$; otherwise, we can redefine $s$ as the largest index $j$ for which $b_j\ne0$.

Evaluating the kernel~\eqref{eq:phi-lin-comb} typically incurs the dominant cost in exponential integrator implementations, and has motivated various methods for computing the $\varphi$-function actions; see, e.g.,~\cite{almo25, almo25a, alhi11, bkt21, ccz23, det23, grt18, krsc25, niwr12} and the references therein.

An influential approach to evaluate~\eqref{eq:phi-lin-comb} is to embed it into the exponential of an augmented matrix. This approach was first considered for the special case of computing the action of $\varphi_1(A)$ in~\cite{saad92}, and was later generalized to further cases in~\cite{niwr12,sidj98} and finally in~\cite[Thm.~1]{grt18}. Specifically, the KIOPS algorithm~\cite{grt18} uses the $(n+s)$-dimensional matrix

\begin{equation}\label{eq:aug-mat-kiops}
K = \begin{bmatrix}
	A & B \\
	0 & J_s(0)
\end{bmatrix}, \quad 
B = \begin{bmatrix}
	b_s & b_{s-1} & \ldots & b_1
\end{bmatrix}\in \C^{n\times s},
\end{equation}
where $J_s(0)$ is the $s$-dimensional Jordan block associated with eigenvalue zero, and then applies an adaptive Krylov method with incomplete orthogonalization. KIOPS is widely regarded as a fast, state-of-the-art matrix-free Krylov solver for the linear combination~\eqref{eq:phi-lin-comb}; see~\cite{almo25, ccz23, det23}.
At the same time, the augmented matrix underlying KIOPS contains the operant vectors $b_j$, $j=1,2,\ldots,s$, in its off-diagonal block. Therefore, the Euclidean norm and field of values of $K$ are properties not only of $A$ and $s$, but also of the particular operant vectors. This can make standard field-of-values convergence estimates like~\cite[Prop.~3.1]{bere09} look much worse than the observed performance of KIOPS.

A different block formulation was derived by Al-Mohy and Liu~\cite[Thm.~2.1]{alli26} for the simultaneous computation of the matrix $\varphi$-functions themselves, rather than their actions on vectors. The augmented matrix in this approach is of size $(s+1)n \times (s+1)n$ and has the form
\begin{equation}\label{eq:aug-mat-blk}
	W=\begin{bmatrix}
		A & E \\ 0 & J
	\end{bmatrix}, \quad
	E = \begin{bmatrix}
		I_n & 0 & \ldots & 0
	\end{bmatrix}\in \C^{n\times sn}, \quad 
	J = J_s(0) \otimes I_n,
\end{equation}
where $I_n$ denotes the identity matrix of order $n$ and $\otimes$ denotes the Kronecker product. We stress that this larger augmentation is not attractive as the basis for a direct matrix-free Krylov implementation, because its auxiliary block has dimension $sn$, thus substantially increasing the length of vectors one would have to work with. Its advantage is analytical: in contrast to $K$ in~\eqref{eq:aug-mat-kiops}, the operator $W$ is independent of the vectors $b_1,\ldots,b_s$. Thus its field of values can be bounded in terms of the field of values of $A$, the nilpotent block $J$, and the fixed coupling $E$ with $\normt{E}=1$.

The purpose of this paper is to connect these two viewpoints.  We show that the KIOPS matrix $K$ is not a different augmented operator in the polynomial approximation problem. Rather, it is the representation of the block matrix $W$ restricted to the invariant subspace
\begin{equation}\label{eq:bb}
	\C^n\oplus \mathcal K_s(J,\bb),\quad
	\bb=[b_1^*,\ldots,b_s^*]^* \in\C^{ns}, 
\end{equation}
written in the nonorthogonal Jordan-chain basis $[J^{s-1}\bb,\ldots,J\bb,\bb]$ for the lower Krylov space.  Hence every polynomial in $K$ acting on the KIOPS starting vector gives, after projection onto the first $n$ entries, exactly the same vector as the corresponding polynomial in $W$ acting on the starting vector $\bb$. 
The coordinate map from KIOPS variables into the larger block formulation induces an inner product and motivates a different metric on field-of-values convergence bounds. This metric field of values of $K$ is contained in the standard field of values of $W$ and avoids the pessimistic right-hand side dependent enlargement seen in the Euclidean field of values of the KIOPS matrix. 

The rest of this paper is organized as follows. We start in~\Cref{sect:poly-krylov} with a brief review of polynomial Krylov approximations and the two augmented formulations for evaluating~\eqref{eq:phi-lin-comb}.~\Cref{sect:basis-framework} develops the basis-dependent block formulation and recovers the KIOPS matrix as a particular coordinate choice. Particularly, the conditioning of the KIOPS basis is discussed.~\Cref{sect:basis-metric} introduces the metric induced by the basis and characterizes the associated metric distortion, including the effect of common right-hand-side scaling. In~\Cref{sect:fov}, we transfer field-of-values polynomial approximation bounds from the larger block matrix to the reduced coordinate formulation via the induced metric and specialize the result to KIOPS. The resulting convergence bounds are illustrated in numerical experiments in~\Cref{sect:experiments}. Conclusions are drawn in~\Cref{sect:conclusions}.

\section{Polynomial Krylov subspace method}
\label{sect:poly-krylov}

For a given function $f$ and vector $b$, the polynomial Krylov method approximates $f(A)b$ in the Krylov subspace $\mathcal{K}_m(A, b) =  \spn\{ b, Ab, \dots, A^{m-1}b \}$, for which the backbone is the Arnoldi method~\cite{arno51}, presented in~\Cref{alg:arnoldi}.\footnote{We note that when $A$ is Hermitian, the Arnoldi method turns into the short-recurrence Lanczos method~\cite{lanc50}, which we do not cover here in detail as the considered augmentation techniques necessarily lead to non-Hermitian operators.} Initialized with $v_1 = b / \normt{b}$, it builds an orthonormal basis $V_m= [v_1,v_2,\dots,v_m]$ of $\mathcal{K}_m(A, b)$ which fulfills the Arnoldi relation
\begin{equation}\label{eq:arnoldi}
AV_m = V_m H_m + h_{m+1,m} v_{m+1} e_m^*,
\end{equation}
where $H_m$ is $m\times m$ upper Hessenberg and $e_m$ is the last column of the $m\times m$ identity matrix.
The resulting Arnoldi approximation $y_m \approx f(A)b$ is obtained by Galerkin projection onto $\mathcal{K}_m(A, b)$~\cite[Eq.~(2.3)]{holu97} and evaluation of $f$ on the $m\times m$ matrix $H_m$,
\begin{equation}\label{eq:fab-arnoldi}
	y_m:= V_m f(V_m^{\dagger} A V_m)V_m^\dagger b = V_m f(H_m)e_1\normt{b},
\end{equation}
where $V_m^\dagger = (V_m^*V_m)^{-1}V_m^*$ denotes the Moore--Penrose pseudoinverse~\cite{penr55} of $V_m$ (which coincides with $V_m^*$ when $V_m$ has orthonormal columns).

A standard way of obtaining bounds for the error of the Arnoldi approximation is by relating it to the error of the best uniform polynomial approximations of $f$ on the field of values 
$$
	\F(A)=\left\{ \frac{z^* Az}{z^*z}\colon z\neq 0 \right\} = \left\{ z^* Az \colon \|z\|_2 = 1 \right\};
$$
see, e.g.,~\cite{bere09,benz20,holu97,waye17}. In particular, when $f$ is analytic on a neighborhood of $\F(A)$,
\begin{equation}\label{eq:krylov_convergence_bound}
    \|f(A)b-y_m\|_2 \leq 2C_{\rm cr} \|b\|_2 E_{m-1}(f,\F(A)),
\end{equation}
where for a compact set $\Omega$,
\begin{equation}\label{eq:error-best-poly}
E_{m-1}(f,\Omega) := \min_{p\in\mathcal P_{m-1}} \max_{\zeta\in\Omega}|f(\zeta)-p(\zeta)|,
\end{equation}
with $\mathcal P_{m-1}$ the space of polynomials of degree at most $m-1$, and where $C_{\rm cr} \le 1+\sqrt2$ is a constant independent of $A$. This result can be obtained straightforwardly from the famous Crouzeix--Palencia theorem, which states that
\begin{equation}\label{eq:crouzeix-ineq}
	\normt{f(A)} \le C_{\rm cr} \max_{\zeta\in\F(A)} |f(\zeta)|;
\end{equation}
see~\cite{crou07,crpa17}. 

Clearly, the bound~\eqref{eq:krylov_convergence_bound} deteriorates more and more for growing $\F(A)$.

\begin{algorithm}[t]
	\caption{Arnoldi iteration with modified Gram--Schmidt orthogonalization.}
	\label{alg:arnoldi}
	\begin{algorithmic}[1]
		\Statex \textbf{Input:} $A \in \mathbb{C}^{n\times n}$, $b \in \mathbb{C}^n$, number of Arnoldi steps $m$.
		\Statex \textbf{Output:} Orthonormal basis $[v_1,v_2,\dots,v_m]$ of $\mathcal{K}_m(A,b)$, upper Hessenberg matrix $H_m=(h_{i,j}) \in \mathbb{C}^{m\times m}$.
		\State $v_1 \gets b/\normt{b}$
		\For{$j = 1, \dots, m$}
		\State $w \gets A v_j$
		\For{$i = 1, \dots, j$}
		\State $h_{ij} \gets v_i^* w$
		\State $w \gets w - h_{ij} v_i$
		\EndFor
		\State $h_{j+1,j} \gets \|w\|_2$
		\If{$h_{j+1,j} = 0$}
		\State terminate \Comment{happy breakdown}
		\EndIf
		\State $v_{j+1} \gets w / h_{j+1,j}$
		\EndFor
	\end{algorithmic}
\end{algorithm}

\subsection{Two block formulations}

As mentioned in \Cref{sect:intro}, one can evaluate a linear combination of $\varphi$-functions~\eqref{eq:phi-lin-comb} by constructing a Krylov subspace associated with an augmented matrix. The KIOPS algorithm~\cite{grt18} evaluates~\eqref{eq:phi-lin-comb} by applying a polynomial Krylov approximation to the augmented exponential action, such that
\begin{equation}\label{eq:poly-krylov-KIOPS}
y= 
\begin{bmatrix}
	I_n & 0
\end{bmatrix} \e^{K} c, \quad 
c = \begin{bmatrix}
	b_0\\ e_s
\end{bmatrix},
\end{equation}
where $e_s$ is the last column of the $s$-by-$s$ identity matrix and $K$ is defined in~\eqref{eq:aug-mat-kiops}. In practice, the augmented matrix need not be assembled.  For $x\in\C^n$ and $w\in\C^s$,
$$
	K \begin{bmatrix}
		x \\ w
		\end{bmatrix} 
	= \begin{bmatrix}
		Ax+Bw \\ J_s(0) w
	\end{bmatrix}
	=
	\begin{bmatrix}
		Ax+\sum_{j=1}^sw_{s+1-j}b_j\\
		J_s(0)w
	\end{bmatrix}.
$$
Hence each Krylov step requires one product with $A$ and a short linear combination of the data vectors $b_j$. The algorithm further combines this matrix-free augmented action with adaptive substepping and incomplete orthogonalization, which is one reason for its observed efficiency in exponential integrator computations~\cite{almo25, ccz23, det23, grt18}.

On the other hand, with the block formulation~\eqref{eq:aug-mat-blk}, the vector~\eqref{eq:phi-lin-comb} is obtained from the first $n$ components of $\e^Wb$, where $b = [b_0^*, \bb^*]^*$, and $W$ and $\bb$ are defined in~\eqref{eq:aug-mat-blk} and~\eqref{eq:bb}, respectively; see~\cite[Thm.~2.1]{alli26}.

We may assume $\bb\ne0$, since if $\bb=0$, then no matrix $\varphi$-functions beyond the exponential are required in~\eqref{eq:phi-lin-comb}; this is a degenerate case that is not considered in this paper.

\section{Basis-dependent Krylov approximation}
\label{sect:basis-framework}

The lower-right block $J$ in $W$ from~\eqref{eq:aug-mat-blk} has size $sn\times sn$, but inherits the nilpotency of the $s$-dimensional Jordan block, so that the dimension of any Krylov subspace with respect to $J$ is bounded by $s$. More generally, we have the following lemma.

\begin{lemma}\label[lemma]{lem:lb-kry-dim}
Let $\bb$ be defined as in~\eqref{eq:bb} and let $m \geq s$. Then, if $b_s \neq 0$, we have $\dim\mathcal K_m(J, \bb)=s$.  More generally, if $q=\max\{j:b_j\ne0\}$, then $\dim\mathcal K_m(J, \bb)=q$.
\end{lemma}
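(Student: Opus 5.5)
We need to know how $J = J_s(0)\otimes I_n$ acts on $\bb=[b_1^*,\ldots,b_s^*]^*$. With the convention that $J_s(0)$ has ones on the superdiagonal, $J$ shifts block components upward: $(J\bb)_i = b_{i+1}$ for $i<s$ and $(J\bb)_s=0$. Iterating gives
\[
J^k\bb=[b_{k+1}^*,\ldots,b_s^*,0,\ldots,0]^*, \qquad J^k\bb=0 \text{ for } k\ge s .
\]
In particular $J^{s-1}\bb=[b_s^*,0,\ldots,0]^*$. This is consistent with the Jordan-chain basis $[J^{s-1}\bb,\ldots,J\bb,\bb]$ mentioned in the introduction, and with $K$ having $b_s$ in the first column of $B$. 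So the plan is first to record this shift formula by a one-line induction on $k$.

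Since $J^k\bb=0$ for $k\ge s$, we have $\mathcal K_m(J,\bb)=\spn\{\bb,J\bb,\ldots,J^{s-1}\bb\}$ whenever $m\ge s$, so the dimension is at most $s$.

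For the general claim, let $q=\max\{j:b_j\neq0\}$. Then $J^k\bb$ has block entries $b_{k+1},\ldots,b_s$ in positions $1,\ldots,s-k$, and these all vanish once $k+1>q$. Hence $J^k\bb=0$ for $k\ge q$, and the dimension is at most $q$ (note $q\le s\le m$).

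For the lower bound, show that $\bb,J\bb,\ldots,J^{q-1}\bb$ are linearly independent. The block in position $q-k$ of $J^k\bb$ is $b_q\neq0$, and all blocks below position $q-k$ vanish. The vectors therefore form a staircase: $J^k\bb$ has its last nonzero block in position $q-k$, and these positions are distinct. Suppose $\sum_{k=0}^{q-1}\alpha_k J^k\bb=0$. Looking at block $q$ isolates $\alpha_0 b_q=0$, so $\alpha_0=0$. Then block $q-1$ gives $\alpha_1 b_q=0$, and so on, forcing every $\alpha_k=0$. This yields $\dim\mathcal K_m(J,\bb)=q$, and the case $b_s\neq0$ is simply $q=s$. (Here $\bb\neq0$, as assumed earlier, so $q\ge1$.)

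The only potential obstacle is fixing the orientation convention of $J_s(0)$, upper versus lower shift. I would settle it using the stated form of $K$ and its starting vector $e_s$. Since the argument is symmetric under reversal, either convention gives the same dimension count, with the staircase read from the top instead of the bottom.
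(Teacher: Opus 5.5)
Your proposal is correct and follows essentially the paper's route: the explicit upward-shift formula for $J^k\bb$, nilpotency for the upper bound, and bottom-up elimination over the block rows using $b_q\neq0$ for independence; you are in fact a bit more explicit than the paper in noting $J^k\bb=0$ for $k\ge q$. One small correction: your closing claim that either orientation of $J_s(0)$ gives the same count is false, since with ones on the subdiagonal the dimension would be $s+1-\min\{j : b_j\neq0\}$ (so $b_s\neq0$ alone would not suffice); this does not affect your proof, because you correctly adopted the superdiagonal convention that the paper uses.
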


\begin{proof}
For $k\ge s$, it holds $J^k = (J_s(0)\otimes I_n)^k = J_s(0)^k \otimes I_n = 0$, which directly yields $\dim\mathcal K_m(J,\bb)\le s$.

For $k=0,\ldots,s-1$, the Kronecker structure of $J=J_s(0)\otimes I_n$ gives
$$
	J^k\bb=
	\begin{bmatrix}
	 b_{k+1}^* & b_{k+2}^* & \ldots & b_s^* & 0^* & \ldots & 0^*
	\end{bmatrix}^*.
$$
Suppose first that $b_s\ne0$ and $\sum_{k=0}^{s-1}\alpha_kJ^k\bb=0$.  Inspecting the last block row of this vector identity gives $\alpha_0b_s=0$, hence $\alpha_0=0$.  
Inspecting the next block row gives $\alpha_1b_s=0$, and so $\alpha_1=0$.  
Continuing upward yields $\alpha_0=\cdots=\alpha_{s-1}=0$.  Hence the set of $s$ vectors $\{\bb,\ J\bb,\ \ldots,\ J^{s-1}\bb\}$ is linearly independent, which implies $\dim\mathcal K_s(J,\bb)=s$. If the last nonzero block of $\bb$ is $b_q$, the same argument applied to the truncated chain $\bb,\ J\bb,\ \ldots,\ J^{q-1}\bb$ gives $\dim\mathcal K_s(J, \bb)=q$.
\end{proof}

Since in our setting, $b_s\ne0$ is assumed, we have $\dim\mathcal K_s(J, \bb)=s$. Let $X\in\C^{ns\times s}$ be any full-rank matrix whose columns form a basis of $\mathcal K_s(J,\bb)$.
Since this space is $J$-invariant, i.e., $J\mathcal K_s(J,\bb) \subset \mathcal K_s(J,\bb)$, there exists a unique matrix $N\in\C^{s\times s}$ such that 
\begin{equation}\label{eq:X-coordinate}
	JX=XN.
\end{equation}
Moreover, $\bb\in \range(X)$, so there exists $\gamma\in\mathbb C^s$ such that $\bb=X\gamma$. If $X$ has orthonormal columns, then $N=X^*JX$ and $\gamma=X^*\bb$.  Otherwise, we have $N = X^\dagger JX$ and $\gamma=X^\dagger\bb$.

Define
\begin{equation}\label{eq:KX-general}
	\KX :=
	\begin{bmatrix}
	A & \BX\\
	0 & N
	\end{bmatrix},
	\quad
	\BX := EX,
	\quad
	\cX := 
	\begin{bmatrix}
	b_0 \\ \gamma
	\end{bmatrix},
\end{equation}
and
\begin{equation}\label{eq:QX}
	\QX=
	\begin{bmatrix}
		I_n&0\\
		0&X
	\end{bmatrix}.
\end{equation}
Clearly, $\range(\QX) = \C^n\oplus\mathcal K_s(J,\bb)$.
Equation~\eqref{eq:KX-general} represents the fundamental basis-dependent framework. The subscript $\X$ indicates the representation induced by the basis map from the coefficient space to the Krylov subspace associated with the lower-right block
$$
X \colon\C^s\to\mathcal K_s(J,\bb)\subset \C^{ns},\quad 
\gamma\mapsto X\gamma,
$$
where the vector $\gamma$ stores the coefficients of the embedded vector
$X\gamma\in\mathcal K_s(J,\bb)$.  
Accordingly, the one-to-one coordinate map
$$
	\QX: \mathbb C^n\oplus\mathbb C^s\to \mathbb C^n\oplus\mathbb C^{ns}
$$
embeds the \emph{reduced} augmented coefficient space into the larger augmented coefficient space.

A crucial element of our forthcoming analysis is the following relation between Krylov spaces built with $W$ and $\KX$.

\begin{theorem}\label{thm:krylov_W_KX}
Let $W$, $\QX$ and $\KX$ be defined as in~\eqref{eq:aug-mat-blk},~\eqref{eq:KX-general} and~\eqref{eq:QX}, respectively, and let $b = \QX\cX$. Then for any polynomial $p$,
\begin{equation}\label{eq:poly-equivalence}
	p(W)b=\QX\,p(\KX)\cX.
\end{equation}
In particular,
\begin{equation}\label{eq:kiops-krylov-equivalence}
	\mathcal K_m(W,b) = \QX\mathcal K_m(\KX, \cX).
\end{equation}
\end{theorem}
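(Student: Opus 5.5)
The whole argument rests on the intertwining relation
\begin{equation*}
	W\QX=\QX\KX ,
\end{equation*}
which I would verify first by block multiplication. Using \eqref{eq:aug-mat-blk}, \eqref{eq:KX-general} and \eqref{eq:QX}, the left-hand side is
\begin{equation*}
	W\QX=\begin{bmatrix} A & EX\\ 0 & JX\end{bmatrix}.
\end{equation*}
The right-hand side is
\begin{equation*}
	\QX\KX=\begin{bmatrix} A & \BX\\ 0 & XN\end{bmatrix}.
\end{equation*}
The $(1,2)$ blocks agree because $\BX=EX$ by definition. The $(2,2)$ blocks agree because $JX=XN$ by \eqref{eq:X-coordinate}, which holds since $\mathcal K_s(J,\bb)$ is $J$-invariant and $X$ has full column rank, as guaranteed by \Cref{lem:lb-kry-dim}.

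Next, I would show $W^k\QX=\QX\KX^k$ for all $k\ge0$ by induction. The case $k=0$ is trivial, and the inductive step is
\begin{equation*}
	W^{k+1}\QX=W\,\QX\KX^k=\QX\KX^{k+1}.
\end{equation*}
By linearity, $p(W)\QX=\QX\,p(\KX)$ for every polynomial $p$. Applying both sides to $\cX$ and using $b=\QX\cX$ gives \eqref{eq:poly-equivalence}. The hypothesis $b=\QX\cX$ is consistent with the setup, because $\QX\cX=[b_0^*,(X\gamma)^*]^*=[b_0^*,\bb^*]^*$.

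For \eqref{eq:kiops-krylov-equivalence}, I would take $p$ to range over $\mathcal P_{m-1}$. Then $\mathcal K_m(W,b)=\{p(W)b\}=\{\QX p(\KX)\cX\}$, and this is exactly the image under $\QX$ of $\mathcal K_m(\KX,\cX)=\{p(\KX)\cX\}$. Because $\QX$ is injective, the correspondence is a bijection and dimensions are preserved, although only the set equality is needed.

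None of the steps is a real obstacle. The only point needing care is that $N$ is well defined, i.e.\ that \eqref{eq:X-coordinate} holds exactly and not merely in a projected sense. This follows from the $J$-invariance of the Krylov space together with the full column rank of $X$: then $X^\dagger X=I$, so $N=X^\dagger JX$ indeed satisfies $XN=JX$.
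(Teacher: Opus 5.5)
Your proof is correct and follows the paper's argument: verify $W\QX=\QX\KX$ by block multiplication using $\BX=EX$ and $JX=XN$, extend it to $W^k\QX=\QX\KX^k$ and then to polynomials, and read off the Krylov-space identity from $\mathcal K_m(W,b)=\{p(W)b : p\in\mathcal P_{m-1}\}$. Your extra checks, that $N=X^\dagger JX$ satisfies $XN=JX$ exactly and that $\QX\cX=[b_0^*,\bb^*]^*$, are accurate and harmless additions.
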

\begin{proof}
A simple calculation shows that
\begin{equation}\label{eq:W-invariant}
	W\QX=
    \begin{bmatrix} A & EX \\ 0 & JX \end{bmatrix}=
    \begin{bmatrix} A & \BX \\ 0 & XN \end{bmatrix}=
    \begin{bmatrix} I_n & \\  & X \end{bmatrix}\begin{bmatrix} A & \BX \\ 0 & N \end{bmatrix}=
    \QX\KX,
\end{equation}
where the second equality follows from~\eqref{eq:X-coordinate}. Repeatedly applying~\eqref{eq:W-invariant} shows
\begin{equation}\label{eq:monomial-equivalence}
W^k\QX = \QX\KX^k, \text{ for all } k\ge 1,
\end{equation}
which directly implies~\eqref{eq:poly-equivalence}. The identity~\eqref{eq:kiops-krylov-equivalence} directly follows from the representation $\mathcal K_m(W,b) = \{p(W)b : p \in\mathcal P_{m-1}\}$.
\end{proof}

\begin{corollary}\label[corollary]{cor:exponential_error}
Let the assumptions of~\Cref{thm:krylov_W_KX} hold and define the first-block projection matrices $\PiK = [I_n,0]\in\C^{n\times(n+s)}$ and $\PiW=[I_n,0]\in\C^{n\times(n+ns)}$.
Then 
\begin{equation}\label{eq:exp-equivalence}
	\e^Wb=\QX\e^{\KX}\cX,
	\quad
	\PiW \e^Wb = \PiK\e^{\KX}\cX,
\end{equation}
and, for any polynomial $p$,
\begin{equation}\label{eq:projected-error-identity}
	\Pi_{K}(\e^{\KX}-p(\KX))\cX =
	\Pi_W(\e^W-p(W))b .
\end{equation}
\end{corollary}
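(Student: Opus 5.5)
The plan is to pass from the polynomial identity of \Cref{thm:krylov_W_KX} to the exponential by a limiting argument, and then to read off the first block using the structure of $\QX$.

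\textbf{Step 1: the exponential identity.} Let $p_k(z)=\sum_{j=0}^{k} z^j/j!$ be the truncated Taylor polynomials of the exponential. By~\eqref{eq:poly-equivalence},
\[
p_k(W)b=\QX\,p_k(\KX)\cX \quad \text{for every } k.
\]
Both $p_k(W)\to \e^W$ and $p_k(\KX)\to\e^{\KX}$ converge in any matrix norm as $k\to\infty$, because the exponential series converges absolutely for every square matrix. Multiplication by the fixed matrix $\QX$ and by the fixed vectors $b$, $\cX$ is continuous. Passing to the limit therefore gives $\e^Wb=\QX\e^{\KX}\cX$. Alternatively, one can apply~\eqref{eq:monomial-equivalence} term by term in the series.

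\textbf{Step 2: the first block.} From the block form~\eqref{eq:QX}, we have
\[
\PiW\QX=[I_n,\,0]\begin{bmatrix} I_n&0\\0&X\end{bmatrix}=[I_n,\,0]=\PiK .
\]
Left-multiplying the identity from Step 1 by $\PiW$ then yields $\PiW\e^Wb=\PiK\e^{\KX}\cX$. This is the second relation in~\eqref{eq:exp-equivalence}.

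\textbf{Step 3: the error identity.} Subtract~\eqref{eq:poly-equivalence} from the identity in Step 1 to get
\[
(\e^W-p(W))b=\QX(\e^{\KX}-p(\KX))\cX .
\]
Applying $\PiW$ and using $\PiW\QX=\PiK$ once more gives~\eqref{eq:projected-error-identity}.

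The only step that is not pure algebra is the limit in Step 1, and it is harmless here because every quantity is finite-dimensional and the exponential series is entire. No assumption on $X$ beyond full rank is needed, since only the intertwining relation $W\QX=\QX\KX$ from~\eqref{eq:W-invariant} is used.
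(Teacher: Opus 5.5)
Your proposal is correct and follows essentially the paper's argument: the paper also obtains $\e^W\QX=\QX\e^{\KX}$ by applying $W^\ell\QX=\QX\KX^\ell$ term by term in the Taylor series (your stated alternative, equivalent to your limit of truncated Taylor polynomials), and then uses $b=\QX\cX$ and $\PiW\QX=\PiK$. The only cosmetic difference is that you subtract the polynomial identity before projecting, whereas the paper projects the exponential and polynomial identities separately and then subtracts.
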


\begin{proof}
We begin by noting that
\begin{equation}\label{eq:piwpik}
\PiW \QX = [I_n, 0] \begin{bmatrix} I_n&0\\0&X \end{bmatrix} = \PiK.
\end{equation}
Using the Taylor series of the exponential together with the relation~\eqref{eq:monomial-equivalence}, we have
$$\e^W\QX =	\sum_{\ell=0}^{\infty}\frac{W^\ell\QX}{\ell!} = \QX\sum_{\ell=0}^{\infty}\frac{\KX^\ell}{\ell!}	= \QX\e^{\KX}.$$
The first equality in~\eqref{eq:exp-equivalence} follows by using $b=\QX\cX$, and the second one then follows from the first one via~\eqref{eq:piwpik}. Equation~\eqref{eq:projected-error-identity} now follows from~\eqref{eq:exp-equivalence} and the fact that~\eqref{eq:poly-equivalence} together with~\eqref{eq:piwpik} implies $\PiW p(W)b= \PiK p(\KX) \cX$.
\end{proof}

The identities~\eqref{eq:poly-equivalence} and~\eqref{eq:exp-equivalence} imply that the desired vector in~\eqref{eq:phi-lin-comb} can be obtained by evaluating the action of the exponential for the reduced augmented matrix $\KX$ on the vector $\cX$ defined in~\eqref{eq:KX-general}, and the resulting projected polynomial error is exactly the projected error of the larger block formulation~\eqref{eq:aug-mat-blk} for the same polynomial. 

The Krylov subspace generated by the reduced pair, $\mathcal K_m(\KX,\cX)\subset\C^n\oplus\C^s$, is embedded in the larger augmented space as $\QX\mathcal K_m(\KX,\cX)\subset \C^n\oplus\C^{ns}$.
Different choices of $X$ only change the coordinate representations $\BX$, $N$, and $\gamma$ of~\eqref{eq:KX-general}, but preserve the projected actions~\eqref{eq:poly-equivalence} and~\eqref{eq:exp-equivalence}.

\subsection{The basis choice of KIOPS}

Define 
\begin{equation}\label{eq:XK}
	\XK=
	\begin{bmatrix}
	J^{s-1}\bb & J^{s-2}\bb & \cdots & J\bb & \bb
	\end{bmatrix}
	= 
	\begin{bmatrix}
		b_s & b_{s-1} & \ldots & \ldots & b_1 \\
		0   & b_s & \ddots  & & b_2 \\
		\vdots & \ddots & \ddots & \ddots & \vdots\\
        \vdots & & \ddots & b_s & b_{s-1} \\
		0 & \ldots & \ldots & 0 & b_s
	\end{bmatrix} \in \C^{ns \times s},
\end{equation}
which has a block upper triangular Toeplitz structure.
By~\Cref{lem:lb-kry-dim}, the columns of $\XK$ form a
nonorthogonal Jordan-chain basis of $\mathcal K_s(J,\bb)$. Moreover, for every $v=\XK\gamma\in\mathcal K_s(J,\bb)$, the coefficient vector of $Jv$ with respect to this basis is $J_s(0)\gamma$. Specifically,
$$
	J\XK=\XK J_s(0),\quad
	E\XK=[b_s,b_{s-1},\ldots,b_1],
	\quad
	\bb=\XK e_s,
$$
and so the coordinate matrix in~\eqref{eq:KX-general} becomes the augmented matrix $K$ of KIOPS in~\eqref{eq:aug-mat-kiops}. The connection with the larger block formulation in~\eqref{eq:aug-mat-blk} is
\begin{equation}\label{eq:kiops-embedding}
	W\QK=\QK K,
	\quad
	b=\QK c= \QK\begin{bmatrix}
		b_0\\ e_s
	\end{bmatrix},
	\quad
	\QK=
	\begin{bmatrix}
	I_n&0\\
	0&\XK
	\end{bmatrix},
\end{equation}
where $c$ is defined in~\eqref{eq:poly-krylov-KIOPS}.

As we have discussed above, different choices of basis $X$ change the pair $(\BX,N)$ in~\eqref{eq:KX-general}.  An orthonormal basis gives a bounded coupling $\normt{\BX}\le1$, but generally produces a dense nilpotent matrix $N$.
The KIOPS basis $\XK$ yields the standard nilpotent Jordan
representation $G=J_s(0)$ and the simple coupling $\BX=[b_s,\ldots,b_1]$, at the cost of a nonorthogonal embedding $\QK$.

The relations derived in~\Cref{thm:krylov_W_KX} and~\Cref{cor:exponential_error} remain applicable for the specific choice of KIOPS basis $\XK$.
Note that we distinguish quantities associated with the KIOPS basis from those associated with a general basis by using the subscripts $\rm K$ and $\rm X$, respectively.

\subsubsection{Conditioning of the KIOPS basis}\label{subsubsec:kiops-cond}

The block triangular Toeplitz structure in the $\XK$ of~\eqref{eq:XK} immediately implies that it has full column rank if and only if $b_s\ne0$, which is our standard assumption that we have made based on~\eqref{eq:phi-lin-comb}.
This KIOPS basis matrix $\XK$ can therefore be ill-conditioned if the diagonal vector $b_s$ is small relative to the other vectors $b_j$.
But what more can be said about how this conditioning depends on the right-hand-side vectors $b_j$? 
To this end, we look at the Gram matrix of $\XK$, which, by the block upper triangular Toeplitz structure, is given explicitly as
\begin{equation}\label{eq:XK-gram}
	\GK := \XK^*\XK,
	\quad
	(\GK)_{ij}
	=
	\sum_{\ell=1}^{\min\{i,j\}}
	b_{s-i+\ell}^* b_{s-j+\ell},
	\quad 1\le i,j\le s,
\end{equation}
where the $(i,j)$ entry is obtained by taking the inner product of columns $i$ and $j$ of $\XK$ and then summing over their common nonzero block rows.

Now suppose that $b_1=b_2=\cdots=b_s=v\ne0$.\footnote{Of course, this is an academic example, as in the case $b_1 = \dots = b_s = v$, the action of all $\varphi$-functions could be approximated in the same (standard) Krylov space $\mathcal{K}_s(A,v)$, so that there is no need for augmenting the matrix at all.} We have, from~\eqref{eq:XK-gram},
\[
    \GK
    =
    \normt{v}^2 M_s,
    \quad
    (M_s)_{ij}=\min\{i,j\},\quad 1\le i,j\le s,
\]
where $M_s$ is the \texttt{minij} matrix from the MATLAB gallery.
It is known that $M_s$ has a tridiagonal inverse, and that the eigenvalues are given by
\[
    \lambda_k(M_s)
    =
    \frac{1}{2-2\cos((2k-1)\pi/(2s+1))},
    \quad k=1,\ldots,s,
\]
so that $\kappa_2(M_s)\sim 16s^2/\pi^2$; see~\cite{ruth52}. It follows that the condition number of $\XK$ grows linearly in $s$,
\[
\kappa_2(\XK) = \sqrt{\kappa_2(\GK)} = \sqrt{\kappa_2(M_s)}\sim 4s/\pi.
\]

It appears from the above example that the conditioning of $\XK$ has little to do with the collinearity of the vectors $b_j$.
On the other hand, a common scaling $b_j\mapsto \beta b_j$ for all $j$, with $\beta\ne 0$, replaces $\XK$ by $\beta\XK$ and hence will leave $\kappa_2(\XK)$ unchanged, since each block entry of $\XK$ depends linearly on the $b_j$.

\section{Basis-induced metric}\label{sect:basis-metric}

Returning to the general basis-dependent framework of~\eqref{eq:KX-general}, the identity $W\QX=\QX\KX$ of~\eqref{eq:W-invariant} shows that $\KX$ and $W$ describe the same action after embedding. To be precise, for every reduced coordinate vector $u=[\nu^*,\gamma^*]^*\in\C^n\oplus\C^s$, applying $\KX$ to $u$ and then embedding with $\QX$ gives the same vector as first embedding $u$ into $\C^n\oplus\C^{ns}$ and then applying $W$.
However, the embedding $\QX$ of~\eqref{eq:QX} does not generally preserve Euclidean distance.
Indeed, it does so if and only if $X^*X=I$, since
\begin{equation}\label{eq:metric-norm-Qx} 
    \normt{\QX u}^2=\normt{\nu}^2+\normt{X\gamma}^2,
    \quad
    \normt{u}^2=\normt{\nu}^2+\normt{\gamma}^2.
\end{equation}

We therefore equip the reduced coordinate space $\HX := \C^n\oplus\C^s$ with the inner product induced by the embedding $\QX$,
\begin{equation}\label{eq:inner-product-induced}
	\langle z, u \rangle_{\MX} := (\QX z)^* (\QX u) = z^*\MX u,
	\quad \MX := \QX^* \QX.
\end{equation}
Since $\QX$ has full column rank, $\MX$ is Hermitian positive definite,
and hence $\langle\cdot,\cdot\rangle_{\MX}$ defines an inner product on $\HX$.
The corresponding $\MX$-induced norm is
\begin{equation}\label{eq:M-X-norm}
	\norm{z}_{\MX}^2 := z^*\MX z = \normt{\QX z}^2,
\end{equation}
which is effectively the Euclidean norm of the embedded vector in the larger block space.
In particular, since $\PiK z=\PiW \QX z$ and $\normt{\PiW}=1$ for the projections defined in~\Cref{cor:exponential_error}, we have
\begin{equation}\label{eq:projection-metric-norm}
	\normt{\Pi_{K}z} = \normt{\Pi_W\QX z} \le \normt{\QX z} =\norm{z}_{\MX}.
\end{equation}

Finally, since $\MX$ is Hermitian positive definite, its principal square root exists and is itself Hermitian positive definite, and hence nonsingular~\cite[Chap.~6]{high:FM}. We denote the principal square root by
\begin{equation}\label{eq:M-sqrt}
    \RX := \MX^{1/2}.
\end{equation}
In particular, $\MX=\RX^2=\RX^*\RX$.
It follows, analogously to~\eqref{eq:M-X-norm}, that $\norm{z}_{\MX}=\normt{\RX z}$.
Therefore, the induced matrix norm satisfies
$$
	\norm{A}_{\MX} :=
	\max_{z\ne0} \frac{\norm{Az}_{\MX}} {\norm{z}_{\MX}} =
	\max_{z\ne0} \frac{\normt{\RX A z}}{\normt{\RX z}}.
$$
With the change of variables $u= \RX z$, this gives
\begin{equation}\label{eq:M-X-norm-mat}
\norm{A}_{\MX} =
\max_{u \ne0} \frac{\normt{\RX A \RX^{-1} u}}{\normt{u}}
= \normt{\RX A \RX^{-1}},
\end{equation}
which identifies the $\MX$-induced matrix norm with the Euclidean norm of the similar matrix $\RX A\RX^{-1}$.
Note that $\QX$ is generally rectangular and therefore cannot be used in place of $\RX$ in~\eqref{eq:M-X-norm-mat}.

\subsection{Metric distortion}\label{subsect:metric-distortion}
Having defined the $\MX$-induced metric in~\eqref{eq:M-X-norm}, it is natural to ask how much it can deviate from the Euclidean metric on the reduced coordinates, and under what circumstances this distortion becomes substantial.
For $u=[\nu^*,\gamma^*]^*$, subtracting the two expressions in~\eqref{eq:metric-norm-Qx} gives
\begin{equation}\label{eq:metric-norm-diff-pre} 
\norm{u}_{\MX}^2 - \normt{u}^2 = \normt{X\gamma}^2 - \normt{\gamma}^2,
\end{equation}
which shows that the squared difference between the induced and Euclidean squared norms is governed by the extent to which the basis matrix $X$ fails to be an isometry.  
More generally, the following result characterizes the metric distortion, in both additive and multiplicative forms, in terms of the singular values of $X$.

\begin{theorem}\label{thm:basis-metric-distortion}
For any full-rank matrix $X\in\C^{ns\times s}$ whose columns form a basis of $\mathcal K_s(J,\bb)$ and $u=[\nu^*,\gamma^*]^*\in\C^n\oplus\C^s$, the $\MX$-induced norm~\eqref{eq:M-X-norm} and the Euclidean norm satisfy
\begin{equation}\label{eq:metric-norm-diff}
	\sup_{u\ne0}
	\frac{\bigl|\norm{u}_{\MX}^2-\normt{u}^2\bigr|}
	     {\normt{u}^2}
	= \max_i|\sigma_i(X)^2-1|,
\end{equation}
where the supremum is attained by vectors with $\nu=0$. Moreover,
\begin{equation}\label{eq:metric-norm-ratio}
		\sup_{u\ne0}\frac{\norm{u}_{\MX}}{\normt{u}}
		= \max\{1,\sigma_{\max}(X)\}, 
		\quad
		\sup_{u\ne0}\frac{\normt{u}}{\norm{u}_{\MX}}
		= \max\{1, 1/\sigma_{\min}(X)\}.
\end{equation}
\end{theorem}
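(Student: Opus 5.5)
The plan is to reduce everything to the identity \eqref{eq:metric-norm-Qx}, which splits both norms into a $\nu$-part (common to both) and a $\gamma$-part. I would then use the singular value decomposition of $X$, or equivalently the spectral decomposition of $X^*X$, whose eigenvalues are $\sigma_i(X)^2>0$ because $X$ has full column rank.

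For the additive statement \eqref{eq:metric-norm-diff}, I start from \eqref{eq:metric-norm-diff-pre}:
\[
\norm{u}_{\MX}^2-\normt{u}^2=\gamma^*(X^*X-I_s)\gamma .
\]
Consequently
\[
\bigl|\norm{u}_{\MX}^2-\normt{u}^2\bigr|\le \normt{X^*X-I_s}\,\normt{\gamma}^2\le \max_i|\sigma_i(X)^2-1|\,\normt{u}^2,
\]
since $X^*X-I_s$ is Hermitian with eigenvalues $\sigma_i(X)^2-1$ and $\normt{\gamma}\le\normt{u}$. For attainment, I take $\nu=0$ and $\gamma$ a unit right singular vector for the index maximizing $|\sigma_i(X)^2-1|$. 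Then both inequalities are equalities, so the supremum is a maximum and is attained with $\nu=0$, as claimed.

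For the ratios \eqref{eq:metric-norm-ratio}, I write
\[
\frac{\norm{u}_{\MX}^2}{\normt{u}^2}=\frac{\normt{\nu}^2+\normt{X\gamma}^2}{\normt{\nu}^2+\normt{\gamma}^2}.
\]
Using $\sigma_{\min}^2\normt{\gamma}^2\le\normt{X\gamma}^2\le\sigma_{\max}^2\normt{\gamma}^2$, this quotient is a weighted average of $1$ (weight $\normt{\nu}^2$) and a number in $[\sigma_{\min}^2,\sigma_{\max}^2]$ (weight $\normt{\gamma}^2$). Hence it is at most $\max\{1,\sigma_{\max}^2\}$ and at least $\min\{1,\sigma_{\min}^2\}$. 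Both bounds are attained:
\begin{itemize}
\item taking $\gamma=0$, $\nu\neq0$ gives the value $1$;
\item taking $\nu=0$ and $\gamma$ the top (respectively bottom) right singular vector gives $\sigma_{\max}^2$ (respectively $\sigma_{\min}^2$).
\end{itemize}
Taking square roots gives the first identity. The second follows by inverting the lower bound, since $1/\min\{1,\sigma_{\min}\}=\max\{1,1/\sigma_{\min}\}$.

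Nothing here is a genuine obstacle. The only point needing care is the role of the $\nu$-component. It forces the "$\max\{1,\cdot\}$" in the ratio statement, because one cannot do better than $1$ when $\gamma=0$. In the additive statement, by contrast, the extremum must come from $\nu=0$, since any nonzero $\nu$ only enlarges the denominator without changing the numerator. I would state these attainment choices explicitly so that every supremum is seen to be a maximum.
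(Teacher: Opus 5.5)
Your proposal is correct and follows essentially the same route as the paper. Both arguments reduce the additive distortion to $\gamma^*(X^*X-I_s)\gamma$, attain the supremum with $\nu=0$ and an eigenvector of $X^*X$, and obtain the ratios from the extreme eigenvalues of the block-diagonal $\MX=\diag(I_n,X^*X)$. Your convex-combination argument and your inversion of the infimum are an elementary rendering of the paper's computations of $\lambda_{\max}(\MX)$ and $\lambda_{\max}(\MX^{-1})$.
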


\begin{proof}
Define $G=X^*\!X$. Then $\norm{u}_{\MX}^2-\normt{u}^2 = \gamma^*(G-I)\gamma$ by~\eqref{eq:metric-norm-diff-pre}. Hence
\[
	\frac{\bigl|\norm{u}_{\MX}^2-\normt{u}^2\bigr|}{\normt{u}^2}
	=
	\frac{|\gamma^*(G-I)\gamma|}{\normt{\nu}^2+\normt{\gamma}^2}.
\]
For each fixed $\gamma$, the numerator is independent of $\nu$, while the denominator is minimized when $\nu=0$. Therefore
\begin{equation}\label{eq:metric-norm-sup-gamma}
\begin{aligned}
	\sup_{u\ne0} \frac{\bigl|\norm{u}_{\MX}^2-\normt{u}^2\bigr|}
	{\normt{u}^2}
	&=
	\sup_{\gamma\ne0}\frac{|\gamma^*(G-I)\gamma|}{\normt{\gamma}^2}.
\end{aligned}
\end{equation}
Since $G-I$ is Hermitian, the Rayleigh--Ritz characterization \cite[sect.~4.2]{hojo13} gives
\begin{equation}\label{eq:metric-norm-bound-gamma}
    \sup_{\gamma\ne0}
    \frac{\bigl|\gamma^*(G-I)\gamma\bigr|}{\gamma^*\gamma} 
    = 
    \normt{G-I}
    = 
    \max_i |\sigma_i(X)^2-1|,
\end{equation}
where the last equality follows because the eigenvalues of $G-I$ are $\sigma_i(X)^2-1$.
Combining~\eqref{eq:metric-norm-sup-gamma} and~\eqref{eq:metric-norm-bound-gamma} proves~\eqref{eq:metric-norm-diff}. 
The supremum is attained by taking $\gamma$ to be an eigenvector of $G-I$ corresponding to an eigenvalue of largest absolute value and setting $\nu=0$.

Finally, recall from~\eqref{eq:QX} and~\eqref{eq:inner-product-induced} that $\MX$ is block diagonal with eigenvalues $1$ and $\sigma_i(X)^2$.
For the first ratio in~\eqref{eq:metric-norm-ratio},
\begin{equation*}
    \left(\sup_{u\ne0}\frac{\norm{u}_{\MX}}{\normt{u}}\right)^2
	=
	\sup_{u\ne0}\frac{u^*\MX u}{u^*u}
	=
	\lambda_{\max}(\MX) 
	=
	\max\{1,\sigma_{\max}(X)^2\}.
\end{equation*}
For the second ratio, taking $v=\MX^{1/2}u$ gives
\begin{equation*}
	\left(\sup_{u\ne0}\frac{\normt{u}}{\norm{u}_{\MX}}\right)^2
	=
	\sup_{v\ne0}\frac{v^*\MX^{-1}v}{v^*v} \\
	=
	\lambda_{\max}(\MX^{-1})
	=
	\max\{1,1/\sigma_{\min}(X)^2\}.
\end{equation*}
Taking square roots proves~\eqref{eq:metric-norm-ratio}.    
\end{proof}

\Cref{thm:basis-metric-distortion} shows that induced and Euclidean metrics are close when the singular values of $X$ are clustered near one, whereas some singular values of $X$ being far from one lead to a corresponding distortion of lengths in the reduced coordinates.

\subsubsection{Sensitivity of KIOPS basis to right-hand-side scaling}
For the KIOPS basis $X=\XK$ of~\eqref{eq:XK}, the special structure enables a more explicit analysis of how the metric distortion depends on the right-hand-side scaling.
A common scaling $b_j\mapsto \beta b_j$ with $\beta\ne 0$ replaces $\XK$ by $\beta\XK$ and hence maps every singular value from $\sigma_i(\XK)$ to $\beta\sigma_i(\XK)$.
The following~\Cref{cor:rhs-scaling-metric-distortion} describes how this scaling can change the metric distortion relative to the fixed Euclidean metric.

\begin{corollary}\label[corollary]{cor:rhs-scaling-metric-distortion}
Let $b_1,\ldots,b_s$ be a family of right-hand-side vectors, and let $\XK$ be the corresponding KIOPS basis matrix of~\eqref{eq:XK}.
For the scaled right-hand-side vectors $\overline b_j=\beta b_j$, $\beta>0$ we can define the corresponding $\beta$-dependent induced norm by
\[
    \norm{u}_{\overline \MK(\beta)}^2 := u^*\overline \MK(\beta) u,
    \quad 
    \overline \MK(\beta)=
    \begin{bmatrix} I_n & \\  & \beta^2\XK^*\XK \end{bmatrix}.
\]
Then
\begin{equation}\label{eq:metric-norm-diff-beta}
    \sup_{u\ne0}
    \frac{\bigl|\norm{u}_{\overline \MK(\beta)}^2-\normt{u}^2\bigr|}
    {\normt{u}^2}
    =
    \max_i|\beta^2\sigma_i(\XK)^2-1|
\end{equation}
and
\begin{equation}\label{eq:metric-norm-ratio-beta}
    \begin{split}
    \sup_{u\ne0}
    \frac{\norm{u}_{\overline \MK(\beta)}}{\normt{u}}
    &= \max\{1, \beta\sigma_{\max}(\XK)\}, \\
    \sup_{u\ne0}
    \frac{\normt{u}}{\norm{u}_{\overline \MK(\beta)}}
    &= \max\{1, 1/(\beta\sigma_{\min}(\XK))\}.
    \end{split}
\end{equation}
\end{corollary}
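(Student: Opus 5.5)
This corollary is a direct specialization of \Cref{thm:basis-metric-distortion}, so the plan is to identify the scaled KIOPS basis as an admissible choice of $X$ and then read off the singular values. For the scaled vectors $\overline b_j=\beta b_j$, the KIOPS basis built via~\eqref{eq:XK} is $\overline{X}_{\rm K}=\beta\XK$. Each block entry is linear in the $b_j$, and $\beta\ne0$ together with $b_s\ne0$ keeps the diagonal block $\beta b_s$ nonzero. Hence $\beta\XK$ has full column rank, by the discussion in \Cref{subsubsec:kiops-cond}. Its columns form a basis of $\mathcal K_s(J,\beta\bb)=\mathcal K_s(J,\bb)$, by \Cref{lem:lb-kry-dim} applied to $\beta\bb$. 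Therefore $X=\beta\XK$ satisfies the hypotheses of \Cref{thm:basis-metric-distortion}.

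Next I will check that the induced matrix agrees with the one in the statement. Using~\eqref{eq:QX} and~\eqref{eq:inner-product-induced} with $X=\beta\XK$, we get $\MX=\QX^*\QX=\diag(I_n,\beta^2\XK^*\XK)$, which is exactly $\overline\MK(\beta)$. The norm $\norm{\cdot}_{\overline\MK(\beta)}$ is therefore the $\MX$-norm of~\eqref{eq:M-X-norm} for this $X$.

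The singular values transform by $\sigma_i(\beta\XK)=|\beta|\,\sigma_i(\XK)=\beta\sigma_i(\XK)$, since $\beta>0$. In particular $\sigma_{\max}(\beta\XK)=\beta\sigma_{\max}(\XK)$ and $\sigma_{\min}(\beta\XK)=\beta\sigma_{\min}(\XK)$. Substituting these into~\eqref{eq:metric-norm-diff} gives~\eqref{eq:metric-norm-diff-beta}, and substituting them into~\eqref{eq:metric-norm-ratio} gives both identities in~\eqref{eq:metric-norm-ratio-beta}.

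No step is a real obstacle. The only points needing care are the full-rank and basis hypotheses for the scaled matrix, and the use of $\beta>0$ so that $|\beta|=\beta$.
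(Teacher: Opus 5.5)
Your proposal is correct and follows essentially the same route as the paper. Both arguments identify the scaled KIOPS basis as $\beta\XK$, note that its induced metric matrix is $\diag(I_n,\beta^2\XK^*\XK)=\overline\MK(\beta)$, and apply \Cref{thm:basis-metric-distortion} with $\sigma_i(\beta\XK)=\beta\sigma_i(\XK)$. Your explicit check that $\beta\XK$ is a full-rank basis of $\mathcal K_s(J,\bb)$ is a small extra bit of care that the paper leaves implicit.
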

\begin{proof}
Define the KIOPS basis matrix corresponding to the scaled vectors $\overline b_j$ as $\overline \XK(\beta):=\beta\XK$, and set the Gram matrix to be $\overline \GK(\beta) := \overline \XK(\beta)^*\overline \XK(\beta) = \beta^2\GK$, recalling from~\eqref{eq:XK-gram} that $\GK=\XK^*\XK$. 
Therefore, we can define the scaled embedding matrix and the corresponding metric matrix as
\[
    \overline \QK(\beta)=
	\begin{bmatrix}
	I_n&0\\
	0& \overline \XK(\beta)
	\end{bmatrix},
    \quad
    \overline \MK(\beta) = \overline \QK(\beta)^* \overline \QK(\beta) = 
    \begin{bmatrix}
	I_n&0\\
	0& \beta^2\XK^*\XK
	\end{bmatrix}.
\]
The expressions~\eqref{eq:metric-norm-diff-beta} and~\eqref{eq:metric-norm-ratio-beta} are obtained by applying~\Cref{thm:basis-metric-distortion} to the scaled basis $\overline \XK(\beta)$ and noting that $\sigma_i(\overline \XK(\beta))=\beta\sigma_i(\XK)$ for every $i$.
\end{proof}

The scaling effect on the metric distortion discussed in this subsection depends on the locations of the singular values relative to one. It should therefore not be conflated with the effect on the conditioning $\kappa_2(\XK)$, which measures only the spread of the singular values. As discussed in~\Cref{subsubsec:kiops-cond}, a common scaling $b_j\mapsto \beta b_j$ leaves $\kappa_2(\XK)$ unchanged.

\section{Field-of-values equivalence} \label{sect:fov}
As outlined in \Cref{sect:poly-krylov}, the field of values plays a crucial role in understanding the convergence behavior of Krylov subspace methods; see in particular~\eqref{eq:krylov_convergence_bound}. We therefore now investigate how the fields of values of the different operators under consideration are related to each other.

To do so, we begin by presenting a simple, auxiliary result on the field of values of block upper triangular matrices.

\begin{proposition}\label[proposition]{pro:fov_triangular}
Let 
$$
M =
\begin{bmatrix}
	A & C\\
	0 & D
\end{bmatrix}.
$$
Then 
\begin{equation}\label{eq:fov-inclusion}
\F(M) \subseteq \conv\bigl(\F(A)\cup\F(D)\bigr) + \{\zeta\in\C: |\zeta|\le \tfrac12\normt{C}\},
\end{equation}
where $\conv(\cdot)$ denotes the convex hull of a set.
\end{proposition}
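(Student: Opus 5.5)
Take a unit vector $z=[x^*,y^*]^*$ with $\normt{x}^2+\normt{y}^2=1$, and split the Rayleigh quotient as
\[
z^*Mz = x^*Ax + y^*Dy + x^*Cy .
\]
The plan is to show that the first two terms together lie in $\conv(\F(A)\cup\F(D))$, and that the coupling term has modulus at most $\tfrac12\normt{C}$. Adding these two facts gives~\eqref{eq:fov-inclusion} directly.

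For the diagonal part, let $t=\normt{x}^2\in[0,1]$, so that $\normt{y}^2=1-t$. If $0<t<1$, then
\[
x^*Ax+y^*Dy = t\,\alpha + (1-t)\,\delta ,
\]
where $\alpha = x^*Ax/\normt{x}^2\in\F(A)$ and $\delta = y^*Dy/\normt{y}^2\in\F(D)$. This is a convex combination of a point of $\F(A)$ and a point of $\F(D)$, so it lies in $\conv(\F(A)\cup\F(D))$. In the endpoint cases $t=0$ or $t=1$, one of the two terms vanishes and the remaining one is itself a point of $\F(D)$ or $\F(A)$. No separate convexity of $\F(A)$ or $\F(D)$ is needed beyond the definition of the convex hull.

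For the coupling term, the Cauchy--Schwarz inequality gives
\[
|x^*Cy| \le \normt{C}\,\normt{x}\,\normt{y} = \normt{C}\sqrt{t(1-t)} \le \tfrac12\normt{C},
\]
because $\sqrt{t(1-t)}\le\tfrac12$ on $[0,1]$ by the AM--GM inequality.

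Combining the two pieces, $z^*Mz = w + \zeta$ with $w\in\conv(\F(A)\cup\F(D))$ and $|\zeta|\le\tfrac12\normt{C}$. This is exactly the Minkowski-sum inclusion claimed in~\eqref{eq:fov-inclusion}. There is no real obstacle in the argument. The only point requiring care is handling the degenerate cases $x=0$ or $y=0$ when normalizing to obtain $\alpha$ and $\delta$, and these are dealt with by the endpoint remark above.
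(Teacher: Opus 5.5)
Your proposal is correct and follows essentially the same route as the paper's proof. Both write the diagonal part as a convex combination of Rayleigh quotients of $A$ and $D$ and bound the coupling term by $\normt{C}\normt{x}\normt{y}\le\tfrac12\normt{C}$; your $\sqrt{t(1-t)}\le\tfrac12$ is the same AM--GM step the paper writes as $\normt{x}\normt{y}\le\tfrac12(\normt{x}^2+\normt{y}^2)$.
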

\begin{proof}
Let $z=[x^*,w^*]^*$ be a unit vector, with sizes of $x$ and $w$ consistent with the block partitioning of $M$. We then have 
$$ \F(M)=\{x^*Ax+w^*Dw+x^*Cw \colon \normt{x}^2+\normt{w}^2=1\}. $$
If $x,w\ne0$, then 
$$ x^*Ax+w^*Dw = \normt{x}^2 \frac{x^*Ax}{\normt{x}^2} + \normt{w}^2 \frac{w^*Dw}{\normt{w}^2},$$
where
$$\frac{x^*Ax}{\normt{x}^2}\in\F(A), \quad \frac{w^*Dw}{\normt{w}^2}\in\F(D).$$
This shows $x^*Ax+w^*Dw$ form a convex combination of points in $\F(A)$ and $\F(D)$. The cases $x=0$ or $w=0$ are also included by the same argument. Moreover,
$$
|x^*Cw| \le \normt{C}\normt{x}\normt{w} \le \frac{1}{2} (\normt{x}^2+\normt{w}^2)\normt{C}
= \frac{1}{2}\normt{C},
$$
which yields the inclusion~\eqref{eq:fov-inclusion}.
\end{proof}

 Applying~\Cref{pro:fov_triangular} to the full block matrix $W$ of~\eqref{eq:aug-mat-blk} gives
\begin{equation}\label{eq:fov-W}
	\F(W)
	\subseteq
	\conv\bigl(\F(A)\cup\F(J)\bigr)
	+
	\{\zeta\in\C\colon |\zeta|\le \tfrac{1}{2}\},
\end{equation}
because $\normt{E}=1$.  On the other hand, applying the same inclusion to the KIOPS matrix of~\eqref{eq:aug-mat-kiops} yields
\begin{equation}\label{eq:fov-K}
	\F(K)
	\subseteq
	\conv\bigl(\F(A)\cup\F(J_s(0))\bigr)
	+
	\{\zeta\in\C: |\zeta|\le \tfrac12\normt{[b_s,\ldots,b_1]}\}.
\end{equation}
Since $J=J_s(0)\otimes I_n$ is unitarily similar to $I_n\otimes J_s(0)=\diag(J_s(0),\ldots,J_s(0))$ via a permutation matrix (see~\cite[sect.~B.13]{high:FM}), we have
$$
	\F(J) = \F\bigl(\diag(J_s(0),\ldots,J_s(0))\bigr) = \conv\bigl(\F(J_s(0))\bigr)
	= \F(J_s(0)).
$$
This field of values is explicitly given by 
\begin{equation}\label{eq:fov-J}
    \F(J) = \F(J_s(0)) = \{\zeta\in\C\colon |\zeta|\le \cos(\tfrac{\pi}{s+1})\},
\end{equation}
the closed disk centered at the origin with radius $\cos(\tfrac{\pi}{s+1})$; see, e.g., \cite[Prop.~1]{hade92}.

Unlike~\eqref{eq:fov-W}, the bound for $\F(K)$ involves
$\normt{[b_s,\ldots,b_1]}$ and is therefore not uniform in the right-hand
side data.  In comparison,~\eqref{eq:fov-W} enlarges
$\conv(\F(A)\cup\F(J))$ only by the fixed disk of radius $1/2$.
This contrast can be interpreted using the metric introduced in~\Cref{sect:basis-metric}, which we now use to compare the field of values of the reduced coordinate matrix with that of the full block formulation.

\subsection{Metric field of values induced by the basis}
The induced inner product~\eqref{eq:inner-product-induced} motivates the metric field of values~\cite{give52, gura97} 
\begin{equation}\label{eq:metric-fov}
\F_{\MX}(\KX)
=
\left\{
\frac{\langle z, \KX z \rangle_{\MX}}{\langle z, z \rangle_{\MX}} \colon z\ne0
\right\},
\quad 
\MX=\QX^*\QX=
\begin{bmatrix}
	I_n&0\\
	0&X^*X
\end{bmatrix}.
\end{equation}
When $X$ has orthonormal columns, $\MX=I$ and the metric field of values agrees with the standard Euclidean field of values of $\KX$. 

The following result characterizes $\F_{\MX}(\KX)$ as the Euclidean numerical range of a matrix that is similar to $\KX$ and furthermore shows that it is always contained in $\F(W)$.

\begin{lemma}\label[lemma]{lem:metric_fov_kx}
The metric field of values satisfies
\begin{equation}\label{eq:fov-similar-equiv}
\F_{\MX}(\KX) = \F(R_X\KX R_X^{-1})
\end{equation}
where $\RX$ is defined in~\eqref{eq:M-sqrt}. Furthermore,
\begin{equation}\label{eq:fov-bound}
\F_{\MX}(\KX) \subseteq	\F(W).
\end{equation}
\end{lemma}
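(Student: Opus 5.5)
For \eqref{eq:fov-similar-equiv}, the plan is a change of variables, the same one used in \eqref{eq:M-X-norm-mat}. For $z\ne0$ set $u=\RX z$, which is again nonzero because $\RX$ is nonsingular. Since $\RX$ is Hermitian and $\MX=\RX^*\RX$, the numerator becomes
\[
\langle z,\KX z\rangle_{\MX}=z^*\RX^*\RX\KX z=u^*\RX\KX\RX^{-1}u ,
\]
and the denominator becomes $\langle z,z\rangle_{\MX}=u^*u$. As $z$ ranges over all nonzero vectors of $\HX$, $u$ does too, because $\RX$ is bijective. Hence the quotient set defining $\F_{\MX}(\KX)$ is exactly $\F(\RX\KX\RX^{-1})$, which gives \eqref{eq:fov-similar-equiv}.

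For the inclusion \eqref{eq:fov-bound}, I would not go through $\RX$. Instead I would use the embedding $\QX$ and the intertwining relation \eqref{eq:W-invariant}, $W\QX=\QX\KX$. For $z\ne0$ put $y=\QX z$. This vector is nonzero because $\QX$ has full column rank, and $\normt{y}^2=\norm{z}_{\MX}^2$ by \eqref{eq:M-X-norm}. Then
\[
\langle z,\KX z\rangle_{\MX}=(\QX z)^*\QX\KX z=(\QX z)^*W\QX z=y^*Wy ,
\]
so the metric Rayleigh quotient equals $y^*Wy/y^*y$, which is a point of $\F(W)$. It follows that $\F_{\MX}(\KX)$ is the set of Rayleigh quotients of $W$ restricted to vectors in $\range(\QX)=\C^n\oplus\mathcal K_s(J,\bb)$. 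In particular it is contained in $\F(W)$.

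There is no real obstacle here. The only points to verify are that $\RX$ is Hermitian and invertible, so that the change of variables is bijective and $\RX^*=\RX$, and that $\QX$ is injective, so that $y\ne0$. Both facts were already established in \Cref{sect:basis-metric}. If desired, one can add the remark that \eqref{eq:fov-bound} together with \eqref{eq:fov-W} gives a bound on $\F_{\MX}(\KX)$ that is independent of the right-hand side.
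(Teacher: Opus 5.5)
Your proposal is correct and follows the paper's proof: the change of variables $u=\RX z$ gives \eqref{eq:fov-similar-equiv}, and the inclusion \eqref{eq:fov-bound} comes from writing $\MX=\QX^*\QX$, applying $W\QX=\QX\KX$, and using the full column rank of $\QX$ so that $\QX z\neq 0$. Your added observation that $\F_{\MX}(\KX)$ is exactly the set of Rayleigh quotients of $W$ on $\range(\QX)$ is a valid sharpening of the inclusion.
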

\begin{proof}
Let $u=\RX z$. Then $z\ne0$ if and only if $u\ne0$, and
\[
    z^*\MX z = z^*\RX^*\RX z = u^*u,
\]
while
\[
    z^*\MX\KX z = z^*\RX^*\RX\KX z = u^*\RX\KX\RX^{-1}u.
\]
Hence, equation~\eqref{eq:fov-similar-equiv} follows via
\begin{equation}\label{eq:fov-similar-equiv-intermediate}
\F_{\MX}(\KX)
    =
    \left\{
    \frac{z^*\MX\KX z}{z^*\MX z}:z\ne0
    \right\}  \\
    =
    \left\{
    \frac{u^*\RX\KX\RX^{-1}u}{u^*u}:u\ne0
    \right\}
    =
    \F(\RX\KX\RX^{-1}).
\end{equation}

Substituting~\eqref{eq:W-invariant} into the first equality in~\eqref{eq:fov-similar-equiv-intermediate} and rewriting $\MX$ as $\QX^*\QX$ gives
\begin{equation*}
	\F_{\MX}(\KX)
	=
	\left\{
	\frac{(\QX z)^*W(\QX z)}{\normt{\QX z}^2}: z\ne0
	\right\}
	\subseteq
	\F(W),
\end{equation*}
where the inclusion uses the full column rank of $\QX$, so that $\QX z\ne0$ whenever $z\ne0$. This proves~\eqref{eq:fov-bound} and thus concludes the proof.
\end{proof}

\subsubsection{KIOPS specialization}\label{subsubsec:kiops-metric-fov}
We now apply the preceding metric field-of-values comparison to the KIOPS basis matrix $\XK$.
For this choice, the general coordinate matrix $\KX$ in~\eqref{eq:KX-general} is the KIOPS matrix $K$ in~\eqref{eq:aug-mat-kiops}, and the embedding is $\QK$ from~\eqref{eq:kiops-embedding}.
Thus $\MK=\QK^*\QK$, and the corresponding metric field of values is
\begin{equation}\label{eq:fov-MK-K}
	\F_{\MK}(K) = \F(R_{\rm K}K R_{\rm K}^{-1}),\quad
	R_{\rm K}^*R_{\rm K}=\MK=\begin{bmatrix}
		I_n&0\\
		0&\XK^*\XK
	\end{bmatrix},
\end{equation}
for the principal matrix square root $R_{\rm K}=\MK^{1/2}$; cf.~\eqref{eq:M-sqrt}.
Taking $X=\XK$ in~\eqref{eq:fov-bound} gives
\[
	\F_{\MK}(K)\subseteq \F(W).
\]
Consequently, the metric field of values of the KIOPS coordinate matrix is controlled by the block-matrix bound~\eqref{eq:fov-W}.
This is the appropriate comparison with the embedded formulation: the field of values is invariant under unitary similarity, but not under a general non-unitary similarity. Therefore, the Euclidean field of values of the coordinate matrix $K$ is not expected to match that of $W$.
Indeed, the elementary estimate~\eqref{eq:fov-K} shows that $\F(K)$ may grow with the size of the right-hand-side block $B = [b_s,\ldots,b_1]$.
The metric field of values $\F_{\MK}(K)$ removes this coordinate distortion by measuring a reduced vector $z$ through the embedded norm $\normt{\QK z}=\norm{z}_{\MK}$.
As analyzed in~\Cref{subsect:metric-distortion}, the difference between this embedded metric and the Euclidean metric is governed by the singular values of the basis matrix, here $\XK$.

The same viewpoint explains the effect of uniform right-hand-side scaling.
Let $\overline b_j=\beta b_j$, $\overline B=\beta B$, and
\[
    \overline K(\beta)
    =
    \begin{bmatrix}
        A&\overline B\\
        0&J_s(0)
    \end{bmatrix}.
\]
The Euclidean field of values $\F\bigl(\overline K(\beta)\bigr)$ can become dominated by the scaled off-diagonal block $\overline B$, which is the behavior anticipated by~\eqref{eq:fov-K}.
In contrast, let $\GK=\XK^*\XK$ be the Gram matrix in~\eqref{eq:XK-gram}, and let its principal square root be $C_{\rm K}:=\GK^{1/2}$.
By~\Cref{cor:rhs-scaling-metric-distortion}, the scaled metric is
$\overline \MK(\beta)=\diag(I_n,\beta^2\GK)$ and we have
\[
    \overline R_{\rm K}(\beta) := \diag(I_n,\beta C_{\rm K}) 
    = \overline \MK(\beta)^{1/2}.
\]
Therefore, by~\Cref{lem:metric_fov_kx},
\begin{equation}\label{eq:fov-MK-K-beta}    
    \F_{\overline \MK(\beta)}(\overline K(\beta))
    =
    \F\!\left(
    \overline R_{\rm K}(\beta)
    \overline K(\beta)
    \overline R_{\rm K}(\beta)^{-1}
    \right)
    =
    \F\!\left(
    \begin{bmatrix}
        A&B C_{\rm K}^{-1}\\
        0&C_{\rm K} J_s(0)C_{\rm K}^{-1}
    \end{bmatrix}
    \right),
\end{equation}
which is independent of $\beta$. Hence, uniform right-hand-side scaling may change the Euclidean field of values of the scaled KIOPS matrix, whereas the corresponding scaled metric field of values stays invariant.

\subsection{Polynomial approximation bounds}
As outlined in \Cref{sect:poly-krylov}, best uniform polynomial approximations play an important role in studying the convergence behavior of Krylov methods. Thus, the inclusion~\eqref{eq:fov-bound} is useful for understanding the error of Krylov approximations for  the augmented matrix formulations. 

The following result uses the above ingredients to bound the norm of the projected error~\eqref{eq:projected-error-identity} in terms of best polynomial approximation on the metric field of values.

\begin{theorem}\label{eq:poly_approx_metric_fov}
We have
\begin{equation}\label{eq:projected-error}
	\inf_{p\in\mathcal P_{m-1}} \normt{\Pi_{\KX}(\e^{\KX}-p(\KX))\cX} \leq C_{\rm cr} \normt{b} E_{m-1}\bigl(\e^z,\F(W)\bigr),
\end{equation}
\end{theorem}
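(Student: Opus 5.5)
The plan is to move the whole problem into the large block space, where Crouzeix--Palencia applies with $\F(W)$. Here $\Pi_{\KX}$ is read as the first-block projection $\PiK$ of \Cref{cor:exponential_error}.

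First, fix an arbitrary $p\in\mathcal P_{m-1}$. By the projected error identity \eqref{eq:projected-error-identity} of \Cref{cor:exponential_error},
\[
\PiK(\e^{\KX}-p(\KX))\cX=\PiW(\e^W-p(W))b .
\]
Since $\normt{\PiW}=1$, this gives
\[
\normt{\PiK(\e^{\KX}-p(\KX))\cX}\le \normt{(\e^W-p(W))b}\le \normt{\e^W-p(W)}\,\normt{b}.
\]
Equivalently, one could argue through the metric: use \eqref{eq:projection-metric-norm} to bound the left side by $\norm{(\e^{\KX}-p(\KX))\cX}_{\MX}$, and then use $\norm{\cdot}_{\MX}=\normt{\QX\,\cdot}$ together with $\QX\cX=b$. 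Both routes land at the same place. The direct route through $W$ is the cleaner one, because it avoids the non-normal similarity $\RX\KX\RX^{-1}$ entirely.

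Second, apply the Crouzeix--Palencia inequality \eqref{eq:crouzeix-ineq} to the matrix $W$ and the function $\e^z-p(z)$. This function is entire, so analyticity near $\F(W)$ is automatic. The result is
\[
\normt{\e^W-p(W)}\le C_{\rm cr}\max_{\zeta\in\F(W)}|\e^\zeta-p(\zeta)| .
\]
Combining this with the first step, and taking the infimum over $p\in\mathcal P_{m-1}$ on both sides, yields \eqref{eq:projected-error}. On the right-hand side the infimum is the minimum $E_{m-1}(\e^z,\F(W))$ from \eqref{eq:error-best-poly}. That minimum is attained because $\F(W)$ is compact and $\mathcal P_{m-1}$ is finite dimensional, though attainment is not even needed for the inequality.

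No step is a real obstacle. The only point needing care is to make sure the bound is uniform in $p$ before taking the infimum, and the argument above delivers exactly that. One could also remark on the connection with the metric field of values. By \Cref{lem:metric_fov_kx}, $\F_{\MX}(\KX)=\F(\RX\KX\RX^{-1})\subseteq\F(W)$, so applying Crouzeix--Palencia to $\RX\KX\RX^{-1}$ with $\normt{\RX\cX}=\normt{b}$ gives the sharper quantity $E_{m-1}(\e^z,\F_{\MX}(\KX))$. This in turn is bounded by the stated right-hand side through monotonicity of $E_{m-1}$ under set inclusion. I would include this as the main line or as a remark, since it explains why the metric viewpoint removes the dependence on $\normt{[b_s,\ldots,b_1]}$ that appears in \eqref{eq:fov-K}.
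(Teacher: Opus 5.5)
Your proof is correct, but your main line takes a different route from the paper's. The paper never applies Crouzeix--Palencia to $W$. It applies it to the similar matrix $\RX\KX\RX^{-1}$, i.e., in the reduced space $\HX$ with the induced inner product. This gives $\normt{\PiK(\e^{\KX}-p(\KX))\cX}\le\norm{(\e^{\KX}-p(\KX))\cX}_{\MX}\le C_{\rm cr}\normt{b}\max_{\zeta\in\F_{\MX}(\KX)}|\e^\zeta-p(\zeta)|$. Only at the very end does it use the inclusion $\F_{\MX}(\KX)\subseteq\F(W)$ from \Cref{lem:metric_fov_kx} to pass to $E_{m-1}(\e^z,\F(W))$. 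That is exactly the argument in your closing remark.

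Your direct route needs only three ingredients: the projected error identity \eqref{eq:projected-error-identity}, $\normt{\PiW}=1$, and Crouzeix--Palencia for $W$ itself. It uses neither $\RX$ nor the inclusion lemma, so it is shorter and fully rigorous for the stated inequality.

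What the paper's route buys is the intermediate, potentially smaller bound $E_{m-1}(\e^z,\F_{\MX}(\KX))$, which is the metric-field-of-values bound plotted in the experiments. It also explains that bound structurally. Write $\QX=U\RX$ with $U=\QX\RX^{-1}$ having orthonormal columns. Then $WU=U\,\RX\KX\RX^{-1}$, so $\RX\KX\RX^{-1}=U^*WU$ is the restriction of $W$ to its invariant subspace $\range(\QX)$ in an orthonormal basis. Crouzeix--Palencia on that subspace sees only this compression's numerical range, which sits inside $\F(W)$. Your route applies Crouzeix--Palencia on all of $\C^n\oplus\C^{ns}$ and can therefore see only $\F(W)$.
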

\begin{proof}
Applying the bound~\eqref{eq:crouzeix-ineq} to $\RX \KX \RX^{-1}$ gives, for every function $f$ analytic on a neighborhood of $\F(\RX \KX \RX^{-1})$,
$$
	\normt{f(\RX \KX \RX^{-1})} \le C_{\rm cr} \max_{\zeta\in\F(\RX \KX \RX^{-1})} |f(\zeta)|,
$$
where, by~\eqref{eq:M-X-norm-mat}, the left-hand side satisfies
$$
	\normt{f( \RX \KX \RX^{-1} )} = \normt{\RX f(\KX) \RX^{-1}} = \norm{f(\KX)}_{\MX}.
$$
Using~\eqref{eq:fov-similar-equiv}, we obtain
\begin{equation}\label{eq:crouzeix-ineq-MX}
	\norm{f(\KX)}_{\MX} \le C_{\rm cr} \max_{\zeta\in\F_{\MX}(\KX)} |f(\zeta)|,
\end{equation}
which is, in fact, the Crouzeix--Palencia bound~\eqref{eq:crouzeix-ineq} applied in the finite-dimensional Hilbert space $\mathcal \HX$ defined in~\eqref{eq:inner-product-induced}; see~\cite{crpa17}. Taking $f(\zeta)=\e^\zeta-p(\zeta)$ in~\eqref{eq:crouzeix-ineq-MX}, with $p\in\mathcal P_{m-1}$, yields
\begin{equation}\label{eq:metric-function-bound}
	\norm{(\e^{\KX}-p(\KX)) \cX}_{\MX} \le
	C_{\rm cr} \norm{\cX}_{\MX}
	\max_{\zeta\in\F_{M_{\rm K}}(\KX)}
	|\e^\zeta-p(\zeta)|.
\end{equation}
Substituting the bound~\eqref{eq:projection-metric-norm} into the left-hand side of~\eqref{eq:metric-function-bound} and using $\norm{\cX}_{\MX} = \normt{\QX \cX} = \normt{b}$, we obtain, for every $p\in\mathcal P_{m-1}$,
$$
	\normt{\Pi_{\KX}(\e^{\KX}-p(\KX))\cX} \le
	C_{\rm cr} \normt{b} \max_{\zeta\in\F_{\MX}(\KX)} |\e^\zeta-p(\zeta)|.
$$
Taking the infimum over $p\in\mathcal P_{m-1}$ now gives
\begin{align*}
	\inf_{p\in\mathcal P_{m-1}} \normt{\Pi_{\KX}(\e^{\KX}-p(\KX))\cX} 
	& \le
	C_{\rm cr} \normt{b} E_{m-1} \bigl(\e^z,\F_{\MX}(\KX)\bigr) \\ 
	& \le
	C_{\rm cr} \normt{b} E_{m-1}\bigl(\e^z,\F(W)\bigr),
\end{align*}
where the second inequality follows from $\F_{\MX}(\KX)\subseteq\F(W)$ in~\eqref{eq:fov-bound}.
\end{proof}

The following corollary specializes the above result to the basis choice used in KIOPS.

\begin{corollary}\label[corollary]{cor:poly_error_kiops}
For the approximant~\eqref{eq:poly-krylov-KIOPS} produced by KIOPS (in the usual Euclidean space), we have the error bound
\begin{equation}\label{eq:projected-error-kiops}
	\inf_{p\in\mathcal P_{m-1}} \normt{\Pi_{K}(\e^{K}-p(K))c}  \le
	C_{\rm cr} \normt{b} E_{m-1}\bigl(\e^z\F(W)\bigr).
\end{equation}    
\end{corollary}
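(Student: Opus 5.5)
This is a direct specialization of \Cref{eq:poly_approx_metric_fov} to the KIOPS basis, so the plan is to instantiate the general framework with $X=\XK$ from~\eqref{eq:XK} and check that every object in~\eqref{eq:projected-error} becomes the corresponding KIOPS object. By \Cref{lem:lb-kry-dim} and the standing assumption $b_s\ne0$, $\XK$ has full column rank and its columns span $\mathcal K_s(J,\bb)$, so it is an admissible basis. The Jordan-chain relations $J\XK=\XK J_s(0)$, $E\XK=[b_s,\ldots,b_1]$ and $\bb=\XK e_s$ give $N=J_s(0)$, $\BX=B$ and $\gamma=e_s$ (uniqueness follows from full rank). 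Hence $\KX=K$ from~\eqref{eq:aug-mat-kiops}, $\cX=c$ from~\eqref{eq:poly-krylov-KIOPS}, $\QX=\QK$, and $b=\QK c$ as in~\eqref{eq:kiops-embedding}. The projection $\Pi_{\KX}$ is simply $\PiK=[I_n,0]$.

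Substituting these identifications into~\eqref{eq:projected-error} gives~\eqref{eq:projected-error-kiops}. If one prefers not to cite the theorem as a black box, I would repeat its chain of inequalities with $\MK=\QK^*\QK$. First, apply Crouzeix--Palencia to $\RX K\RX^{-1}$ with $R_{\rm K}=\MK^{1/2}$, using~\eqref{eq:fov-MK-K} and~\eqref{eq:M-X-norm-mat}. This gives $\norm{(\e^K-p(K))c}_{\MK}\le C_{\rm cr}\norm{c}_{\MK}\max_{\F_{\MK}(K)}|\e^\zeta-p(\zeta)|$. Next, bound the left side from below by $\normt{\PiK(\e^K-p(K))c}$ via~\eqref{eq:projection-metric-norm}. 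Then use $\norm{c}_{\MK}=\normt{\QK c}=\normt{b}$. Finally, take the infimum over $p$ and enlarge the set using $\F_{\MK}(K)\subseteq\F(W)$ from \Cref{lem:metric_fov_kx}.

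No step is a real obstacle. The one point to state carefully is the interpretation of ``in the usual Euclidean space'': the left-hand side is the Euclidean norm of an $n$-vector, namely the projected error. The non-Euclidean metric $\MK$ enters only as an intermediate device and disappears once~\eqref{eq:projection-metric-norm} is applied, so the final bound involves only $\normt{b}$ and $\F(W)$. It does not involve $\normt{c}$ or $\F(K)$, which is exactly what makes it independent of the enlargement in~\eqref{eq:fov-K}. I would also note that the notation $E_{m-1}(\e^z\F(W))$ in the statement should be read as $E_{m-1}(\e^z,\F(W))$.
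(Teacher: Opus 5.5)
Your proposal is correct and follows the paper's proof, which simply instantiates \Cref{eq:poly_approx_metric_fov} with $X=\XK$. Your explicit check that $N=J_s(0)$, $\BX=B$ and $\gamma=e_s$ (so that $\KX=K$, $\cX=c$ and $\QX=\QK$) spells out the identifications the paper records around \eqref{eq:kiops-embedding}, and you are right to read $E_{m-1}(\e^z\F(W))$ as $E_{m-1}(\e^z,\F(W))$.
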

\begin{proof}
The result immediately follows by using $\XK$ as the basis $X$ in~\eqref{eq:projected-error}.
\end{proof}
Since $\F(W)$ is bounded as in~\eqref{eq:fov-W}, the approximation region in~\eqref{eq:projected-error-kiops} is independent of the individual norms of $b_1,\ldots,b_s$.  These vectors appear only through the scale factor $\normt{b}$. The bound~\eqref{eq:projected-error-kiops} is the basic mechanism by which field-of-values information for the larger block formulation~\eqref{eq:aug-mat-blk} can be used for the KIOPS formulation~\eqref{eq:aug-mat-kiops}. 

Standard techniques for bounding the best uniform polynomial approximation error $E_{m-1}\bigl(\e^z,\F(W)\bigr)$ involve conformal mappings and the Faber transform; see, e.g.,~\cite{bere09}. However, due to the non-trivial shape of the field of values of the augmented matrix, the respective conformal mappings are typically not known in closed form. Therefore, explicitly evaluating~\eqref{eq:projected-error-kiops} will in general involve numerically computing a conformal mapping, or estimating the polynomial approximation error via, e.g., a Remez-type algorithm or an approach as described in \Cref{appendix:poly_error}. Even in the arguably simplest case that $A$ is Hermitian negative semidefinite, so that $\F(A)$ reduces to an interval on the negative real line, it is impossible to explicitly construct a conformal mapping for the enclosing set on the right-hand side of~\eqref{eq:fov-W}. 

The following auxiliary result demonstrates how to obtain an upper bound for the error by further enclosing this set in a Bernstein ellipse and using a truncated Chebyshev series for estimating the error. Note that one cannot expect this bound to be sharp, as the actual spectral region is replaced by a slightly larger set. However, as we will demonstrate, it at least yields an error bound that is much more reminiscent of the actual behavior than what one would obtain from a bound on $\F(K)$.

\begin{proposition}\label[proposition]{pro:enclosing_ellipse}
Let $A$ be negative semidefinite with $\F(A)=[-\alpha, 0]$, $\alpha>0$. Then 
\[
    E_{m-1}(\e^z,\F(W))
    \leq
    \e^{-h}
    \sum_{k=m}^{\infty}
    B_k(h)\left(\chi_s^k+\chi_s^{-k}\right),
\]
where $B_k$ denotes the modified Bessel function of the first kind and
\[
    \chi_s := 1+\frac{R_s}{h} + \sqrt{\frac{R_s}{h}\left(2+\frac{R_s}{h}\right)},
    \quad
    h:=\frac{\alpha}{2},
    \quad
    R_s:=r_s+\frac12,
    \quad
    r_s=\cos\left(\frac{\pi}{s+1}\right).
\]
\end{proposition}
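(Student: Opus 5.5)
The plan is to enclose $\F(W)$ in a single Bernstein ellipse associated with the interval $[-\alpha,0]$, and then bound the best approximation error by the tail of the Chebyshev expansion of $\e^z$ on that ellipse.

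\emph{Enclosing set.} By \eqref{eq:fov-W} and \eqref{eq:fov-J}, with $\F(A)=[-\alpha,0]$,
\[
\F(W)\subseteq \conv\bigl([-\alpha,0]\cup\{|\zeta|\le r_s\}\bigr)+\{|\zeta|\le\tfrac12\}.
\]
Every point of $\conv([-\alpha,0]\cup\{|\zeta|\le r_s\})$ is a convex combination $tx+(1-t)y$ with $x\in[-\alpha,0]$ and $|y|\le r_s$. Its distance to $[-\alpha,0]$ is therefore at most $(1-t)r_s\le r_s$. Consequently
\[
\F(W)\subseteq \Omega_s:=\{\zeta\in\C:\ \operatorname{dist}(\zeta,[-\alpha,0])\le R_s\}.
\]

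\emph{Affine map and choice of ellipse.} Map $[-\alpha,0]$ to $[-1,1]$ via $\zeta=h(w-1)$ with $h=\alpha/2$. Then $\Omega_s$ becomes the $\rho$-neighbourhood of $[-1,1]$ with $\rho=R_s/h$. A Bernstein ellipse $\mathcal E_\chi$ (foci $\pm1$, semi-axes $(\chi\pm\chi^{-1})/2$) contains this neighbourhood if its minor semi-axis is at least $\rho$ and its major semi-axis is at least $1+\rho$. The ellipse with major semi-axis exactly $1+\rho$ has $\chi=1+\rho+\sqrt{(1+\rho)^2-1}=1+\rho+\sqrt{\rho(2+\rho)}$, which is $\chi_s$. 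Its minor semi-axis is $\sqrt{(1+\rho)^2-1}\ge\rho$. Verifying that the stadium lies inside this ellipse is the step I expect to be the main obstacle. I would check the distance from the ellipse to the segment: the ellipse is convex, symmetric, and contains the points $\pm(1+\rho)$ and $\pm i\sqrt{\rho(2+\rho)}$. The strategy is to show that every point of $[-1,1]$ has distance at least $\rho$ to $\partial\mathcal E_\chi$, using that confocal ellipses have their minimum distance to the focal segment attained at the endpoints of an axis; I would verify this by parametrizing $\partial\mathcal E_\chi$ and minimizing the squared distance to $[-1,1]$. If this turns out to fail, one enlarges $\chi$ slightly, but the statement suggests that equality at the major axis suffices.

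\emph{Chebyshev tail.} With $\e^{\zeta}=\e^{-h}\e^{hw}$ and the classical expansion $\e^{hw}=B_0(h)+2\sum_{k\ge1}B_k(h)T_k(w)$, I would take $p$ to be the image of the degree-$(m-1)$ truncation, which is admissible in $\mathcal P_{m-1}$. The error is then $\e^{-h}\,2\sum_{k\ge m}B_k(h)T_k(w)$. On $\mathcal E_\chi$, writing $w=(u+u^{-1})/2$ with $|u|=\chi$, we have $|T_k(w)|=|u^k+u^{-k}|/2\le(\chi^k+\chi^{-k})/2$. By the maximum principle, this bound extends to the interior of the ellipse and hence to $\Omega_s\supseteq\F(W)$. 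This gives
\[
E_{m-1}(\e^z,\F(W))\le \e^{-h}\sum_{k\ge m}B_k(h)(\chi_s^k+\chi_s^{-k}),
\]
and the series converges because $B_k(h)$ decays superexponentially in $k$.
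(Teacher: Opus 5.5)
Your proposal is correct and follows the paper's proof essentially step for step. You enclose $\F(W)$ in the $R_s$-neighbourhood of $[-\alpha,0]$ (the paper's stadium $\mathcal S_s$), enclose that in the confocal Bernstein ellipse with major semi-axis $h+R_s$, and bound the tail of the Chebyshev--Bessel expansion of $\e^{hw}$ there. The containment you flag as the main obstacle, which the paper only asserts, follows directly from the triangle inequality: if $w=t+\delta$ with $t\in[-1,1]$ and $|\delta|\le\rho$, then $|w-1|+|w+1|\le |t-1|+|t+1|+2|\delta|\le 2(1+\rho)$, so $w$ lies in the filled ellipse with foci $\pm1$ and major semi-axis $1+\rho$, and no enlargement of $\chi$ is needed.
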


\begin{proof}
We recall from~\eqref{eq:fov-J} that $\F(J) = \{\zeta\in\C\colon |\zeta| \leq r_s\}$, so that $\conv\bigl([-\alpha,0]\cup \F(J)\bigr) = \conv\bigl(\{-\alpha\}\cup  \{\zeta\in\C\colon |\zeta| \leq r_s\}\bigr)$. Moreover, by standard properties of the Minkowski sum, we have
\[
\conv\bigl(\{-\alpha\}\cup  \{\zeta\in\C\colon |\zeta| \leq r_s\}\bigr) \subseteq [-\alpha,0]+\{\zeta\in\C\colon |\zeta| \leq R_s\} =: \mathcal{S}_s.
\]
We now further enclose the Bunimovich stadium $\mathcal{S}_s$ in a Bernstein ellipse $\mathcal{E}_s$ with foci at $-\alpha$ and $0$ and semi-axes
\[
a_s:=h+R_s, \quad b_s:=\sqrt{a_s^2-h^2}=\sqrt{R_s(2h+R_s)},
\]
i.e., we take
\[
\mathcal{E}_s = \left\{ -h+\frac{h}{2}\left(\zeta+\zeta^{-1}\right) : |\zeta|\leq \chi_s \right\},
\]
where
\[
    \chi_s=\frac{a_s+b_s}{h} = \frac{h+R_s+\sqrt{R_s(2h+R_s)}}{h} = 
    1 + \frac{R_s}{h} + \sqrt{\frac{R_s}{h}\left(2+\frac{R_s}{h}\right)}.
\]
We now estimate the polynomial approximation error on $\mathcal{E}_s$. Note that via the affine change of variables
\[
    \zeta=-h+h\xi, \quad \xi=\frac{\zeta+h}{h},
\]
the ellipse $\mathcal E_s$ is mapped to the standard Bernstein ellipse
\[
\mathfrak{E}_{\chi_s} = \left\{\frac12\left(\xi+\xi^{-1}\right): |\xi|\leq \chi_s \right\}.
\]
The Chebyshev expansion of $\e^{h\xi}$ is
\[
\e^{h\xi} = B_0(h)+2\sum_{k=1}^{\infty} B_k(h)T_k(\xi),
\]
where $T_k$ denotes the Chebyshev polynomial of degree $k$; see, e.g.,~\cite[Chapter~5]{maha02}. We define a polynomial approximation $p_{m-1}$ for $\e^\zeta$ by truncating the Chebyshev expansion of $\e^\zeta=\e^{-h+h\xi} = \e^{-h}\e^{h\xi}$ after the $m$th term, i.e., 
\[
p_{m-1}(\zeta) := \e^{-h} \left[B_0(h) + 2\sum_{k=1}^{m-1} B_k(h) T_k\!\left(\frac{\zeta+h}{h}\right) \right].
\]
For $\xi\in \mathfrak E_{\chi_s}$,
\[
|T_k(\xi)| \leq \frac{\chi_s^k+\chi_s^{-k}}{2}.
\]
Therefore, for $\zeta\in \mathcal E_s$, we have
\[
|\e^\zeta-p_{m-1}(\zeta)| \leq 2\e^{-h} \sum_{k=m}^{\infty} B_k(h) \left|T_k\!\left(\frac{\zeta+h}{h}\right)\right| \leq \e^{-h}\sum_{k=m}^{\infty} B_k(h) \left(\chi_s^k+\chi_s^{-k}\right).
\]
Finally, since
\[
    \F(W)\subseteq \mathcal S_s\subseteq \mathcal E_s,
\]
the same estimate holds on $\mathcal F(W)$. 
Since $p_{m-1}\in \mathcal P_{m-1}$, taking the infimum over all degree-$(m-1)$ polynomials gives the stated bound for $E_{m-1}(\e^z,\F(W))$.
\end{proof}

\begin{figure}
	\centering
	\includegraphics[width=0.47\linewidth]{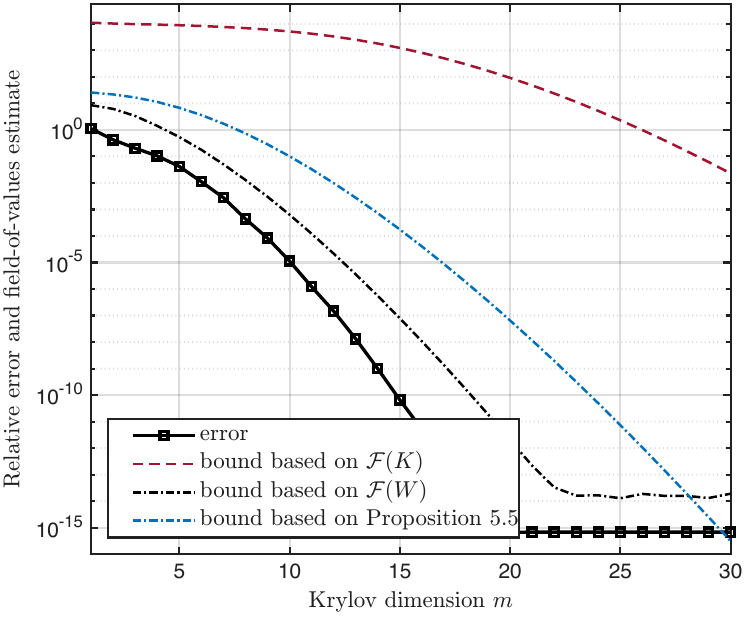}
	\caption{Comparison of bounds based on best polynomial approximation on $\F(K)$ and $\F(W)$ (which require numerically solving a polynomial approximation problem), as well as the a priori bound from~\Cref{pro:enclosing_ellipse} for the \texttt{poisson} model problem. See the description in~\Cref{subsect:five_examples} for details on the experimental setup.}
	\label{fig:poisson_proposition}
\end{figure}

In the proof of~\Cref{pro:enclosing_ellipse}, a Bunimovich stadium appears as intermediate enclosure for $\F(W)$. We refer to~\cite{varm14} for an in-depth discussion of the fact that the corresponding conformal mapping does not have a closed form. The technique of further embedding a Bunimovich stadium into an ellipse to obtain convergence estimates, similar to our derivations above, was previously used in the context of an inexact shift-invert method for Stieltjes matrix functions in~\cite[Appendix A]{gusc21}.

To illustrate the bound obtained from~\Cref{pro:enclosing_ellipse}, we perform a small numerical experiment. The setup is the same as for the \texttt{poisson} test problem considered in~\Cref{sect:experiments} below; see the description there for details. In~\Cref{fig:poisson_proposition} we plot the actual Krylov error for approximating the action of $\varphi$-functions, together with the bounds obtained from~\Cref{pro:enclosing_ellipse} as well as the bounds obtained via optimal polynomial approximation on $\F(K)$ and $\F(W)$ (computed as described in~\Cref{appendix:poly_error}). First, we observe that the bound based on polynomial approximation on $\F(W)$ very accurately predicts the actual convergence behavior of the Krylov method, while the bound obtained for $\F(K)$ is not predictive at all, having a wrong slope and overestimating the magnitude of the error by several orders of magnitude. The bound from~\Cref{pro:enclosing_ellipse} lies in between these two, as expected. Being based on polynomial approximation on a set that is larger than $\F(W)$, it overestimates the magnitude of the error a bit more and also does not predict the convergence slope completely correctly, in particular in the second phase of faster convergence. Still, it represents a substantial improvement over the bound based on $\F(K)$, and, in contrast to the other two bounds, can be cheaply evaluated without the need to numerically solve complicated polynomial approximation problems.

\section{Numerical experiments}\label{sect:experiments}

\begin{figure}[t]
	\centering
	\begin{subfigure}{0.47\linewidth}
		\centering
		\includegraphics[width=\linewidth]{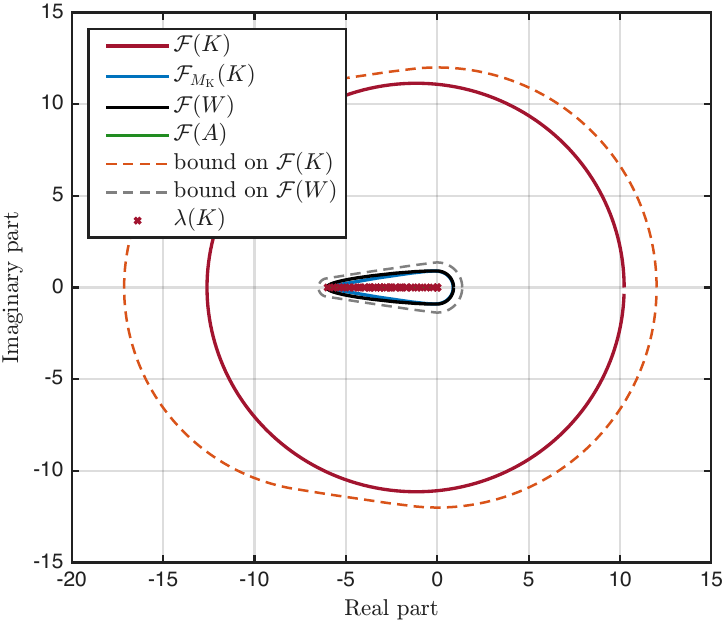}
	\end{subfigure}
	\hfill
	\begin{subfigure}{0.47\linewidth}
		\centering
		\includegraphics[width=\linewidth]{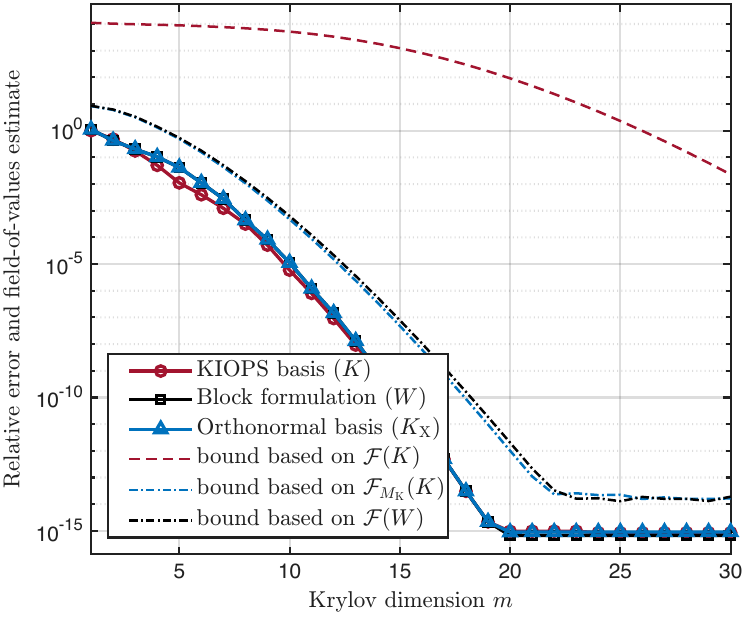}
	\end{subfigure}
	\caption{Negative Poisson matrix from a two-dimensional finite-difference Laplacian.}
	\label{fig:test-poisson}
\end{figure}

In this section we demonstrate the effectiveness of the new error bounds and results derived in the previous sections.
We compare error bounds obtained from different field-of-values sets using characteristic numerical examples of modest size. 
This is necessary for the sampled boundary computations and the discrete minimax problems in~\Cref{appendix:poly_error} to remain feasible within a reasonable amount of time. 

All experiments were run using MATLAB R2025b Update 5 on a Mac laptop equipped with a 2.0 GHz quad-core Intel Core i5-1038NG7 processor and 16 GiB of RAM. The code we used to produce the results in this section is available on GitHub\footnote{\url{https://github.com/xiaobo-liu/phimv_krylov_aug}.}.

For all experiments below, the input matrix $A$ is scaled as $A \gets \alpha A/\normt{A}$ with $\alpha=6$. This moderate scaling is not special, but makes the sizes of the fields of values comparable across different experiments (since the field of values scales linearly).
The Poisson example has dimension $49$ while the rest have dimension $40$. 
All test matrices are signed or shifted so that their spectra lie in the left half-plane, reflecting the dissipative linear operators arising in applications of exponential integrators~\cite{hoos10}.

We take $s=5$ in~\eqref{eq:phi-lin-comb}, a modest augmented KIOPS dimension such that the auxiliary Jordan block is large enough to produce a visible disk $\F(J_s(0))$ in~\eqref{eq:fov-J}.
The entries of the vector $b_0$ are drawn from a Gaussian distribution and normalized to unit $2$-norm. The remaining vectors are chosen from the family
\begin{equation}\label{eq:bj}
	b_j =
	\beta \; \frac{q+\delta r_j}
	{\normt{q+\delta r_j}},
	\quad j=1,\ldots,s,
\end{equation}
where $q$ and the $r_j$ are unit vectors obtained by normalizing independent standard Gaussian vectors. The common factor $\beta$ scales the upper-right block of the KIOPS matrix $K$ in
\eqref{eq:aug-mat-kiops} linearly, and the collinearity of the $b_j$ is affected by the perturbation level $\delta$. 
A larger $\beta$ may enlarge $\F(K)$ and make the estimate in~\eqref{eq:fov-K} more pessimistic, because the Euclidean field of values is sensitive to the scaled off-diagonal block. In contrast, it leaves $\F(W)$ and the corresponding scaled metric field of values unchanged, since the scaling is compensated by the metric $\overline \MK(\beta)$ from~\Cref{cor:rhs-scaling-metric-distortion}; see~\eqref{eq:fov-MK-K-beta}.
For the five model examples considered in \Cref{subsect:five_examples}, we take $\beta=10$ and $\delta = 10^{-1}$ in~\eqref{eq:bj}. The right-hand-side sensitivity tests in \Cref{subsect:rhs_sens} then vary either $\delta$ or $\beta$ while keeping the other parameter fixed at this reference value.

\subsection{Model examples}\label{subsect:five_examples}

\begin{figure}[t]
	\centering
	\begin{subfigure}{0.47\linewidth}
		\includegraphics[width=\linewidth]{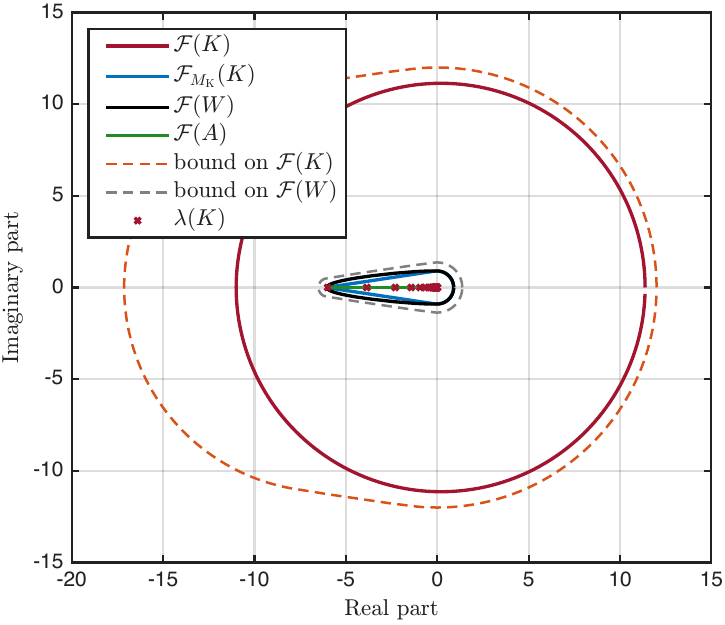}
	\end{subfigure}
	\hfill
	\begin{subfigure}{0.47\linewidth}
		\includegraphics[width=\linewidth]{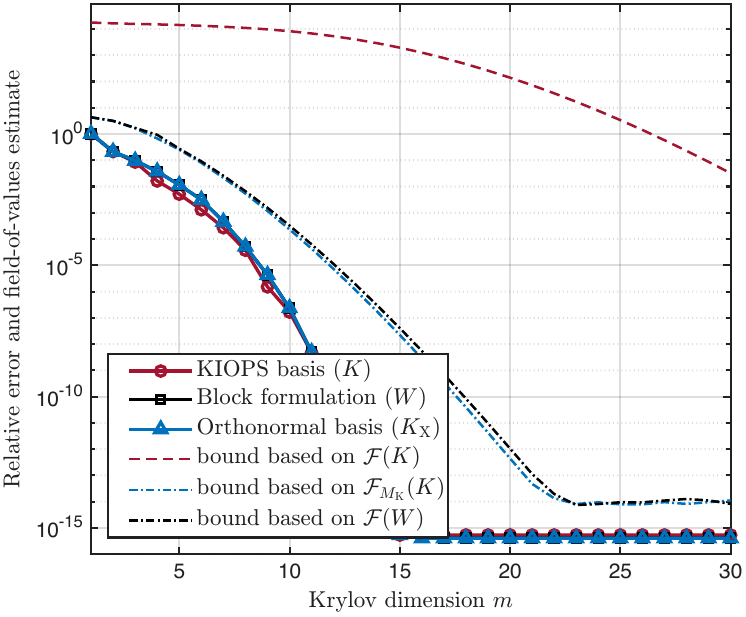}
	\end{subfigure}
	\caption{Hermitian negative definite Kac--Murdock--Szeg\"{o} Toeplitz matrix.}
	\label{fig:test-kms}
\end{figure}

We first compare three Arnoldi formulations for computing the same projected exponential action associated with~\eqref{eq:phi-lin-comb}:
\begin{itemize}
	\item KIOPS basis: Arnoldi is applied to the KIOPS matrix $K$ in
	\eqref{eq:aug-mat-kiops}.
	\item Full block formulation: Arnoldi is applied to the block matrix $W$ in
	\eqref{eq:aug-mat-blk}.
	\item Orthonormal basis: Arnoldi is applied to the matrix $\KX$ in
	\eqref{eq:KX-general} with $X=\XK C_{\rm K}^{-1}$, where
	$C_{\rm K}=(\XK^*\XK)^{1/2}$ and $\XK$ is defined in~\eqref{eq:XK}.
\end{itemize}
In all three cases, the Arnoldi iteration uses modified Gram--Schmidt orthogonalization with one reorthogonalization pass. 
To isolate the effects of the Arnoldi formulation and the associated field-of-values bounds, the adaptive substepping and incomplete orthogonalization of the practical KIOPS algorithm~\cite{grt18} are not implemented.
Note that the full block and orthonormal-basis formulations differ only by an isometric coordinate representation of the invariant subspace in~\Cref{thm:krylov_W_KX}.

We discuss five model experiments, each displayed in a two-panel figure. The left panel compares the sampled boundaries of $\F(K)$, $\F(W)$, and $\F_{\MK}(K)$, along with $\F(A)$ (filled in light color), which enables us to see how the augmentation enlarges the geometry of the spectral region of the original matrix. It also shows the enclosing regions from~\eqref{eq:fov-W} and~\eqref{eq:fov-K}, together with the eigenvalues of $K$. 
The right panel reports the relative error $\normt{y_m-y}/\normt{y}$ for the three Arnoldi realizations for $m=1,\ldots,30$, where the reference solution vector $y$ of~\eqref{eq:phi-lin-comb} is computed by~\eqref{eq:poly-krylov-KIOPS} with explicit assembly of the augmented matrix $K$ and the matrix exponential computed by MATLAB's \texttt{expm}~\cite{alhi09}. These errors are compared with the corresponding bounds computed from the sampled sets $\F(K)$, $\F_{\MK}(K)$, and $\F(W)$ by the discrete best polynomial approximation (computed as described in~\Cref{appendix:poly_error}).

\paragraph{Example 1}
The first model problem uses $\texttt{-gallery('poisson',7)}$, the five-point discretization of the negative two-dimensional Laplacian on a $7\times7$ grid. 
It corresponds to the discretized dissipative diffusion operator arising in heat and reaction--diffusion equations~\cite{hls98} and is a standard test problem for actions of the matrix exponential and $\varphi$-functions in exponential integrators~\cite{alhi11}.
Here, $A$ is negative definite and $\F(A)$ is an interval,  which is precisely when~\Cref{pro:enclosing_ellipse} is applicable; see~\Cref{fig:poisson_proposition} for the error bound obtained from it.

The results are presented in~\Cref{fig:test-poisson}.
In the left panel, $\F(W)$ and the metric set $\F_{\MK}(K)$ are small enlargements of the interval $\F(A)$ on the negative real axis, while $\F(K)$ expands into a much larger region. We clearly observe that $\F_{\MK}(K)\subseteq \F(W)$ (with $\F_{\MK}(K)$ nearly filling $\F(W)$, in fact), as proved in~\Cref{lem:metric_fov_kx}.
In the right panel, the behavior of the three Arnoldi realizations is almost indistinguishable, reaching unit roundoff accuracy after about $20$ steps.
The bounds based on $\F(W)$ and $\F_{\MK}(K)$ follow the observed error decay throughout, whereas the bounds based on $\F(K)$ are always at least several orders of magnitude too pessimistic.

\paragraph{Example 2}
We use the Kac--Murdock--Szeg\"{o} Toeplitz matrix~\cite{kms53} with entries $a_{ij} = -(0.88^{|i-j|})$, generated by 
$\texttt{-gallery('kms',40,0.88)}$.
The value $0.88$ of the correlation parameter is not special; it makes the scaled spectrum stretch from $-6$ to a point close to the origin with strong spectral clustering, in contrast with the more evenly spread spectrum of the Poisson example above.

Here $\F(A)$ is also an interval, and the field-of-values plot on the left of~\Cref{fig:test-kms} exhibits very similar geometry as in \emph{Example~1}.
The sets $\F(W)$ and $\F_{\MK}(K)$ are only slightly larger than $\F(A)$, unlike $\F(K)$, and the block-based enclosure in~\eqref{eq:fov-W} again provides fairly good bounds for $\F(W)$ and $\F_{\MK}(K)$. While the bound~\eqref{eq:fov-K} is a less tight enclosure than~\eqref{eq:fov-W}, it still predicts the relevant region of the complex plane reasonably well.
In the convergence plot of~\Cref{fig:test-kms}, the error decay phase is slightly shorter than in the Poisson case, with about $15$ steps. The behavior is very accurately predicted by the metric and the larger block bounds in the initial phase of slower convergence, but the bounds become less tight once convergence speeds up after a few iterations.
The bound based on $\F(K)$, on the contrary, remains not informative at all.

\begin{figure}[t]
	\centering
	\begin{subfigure}{0.47\linewidth}
		\centering
		\includegraphics[width=\linewidth]{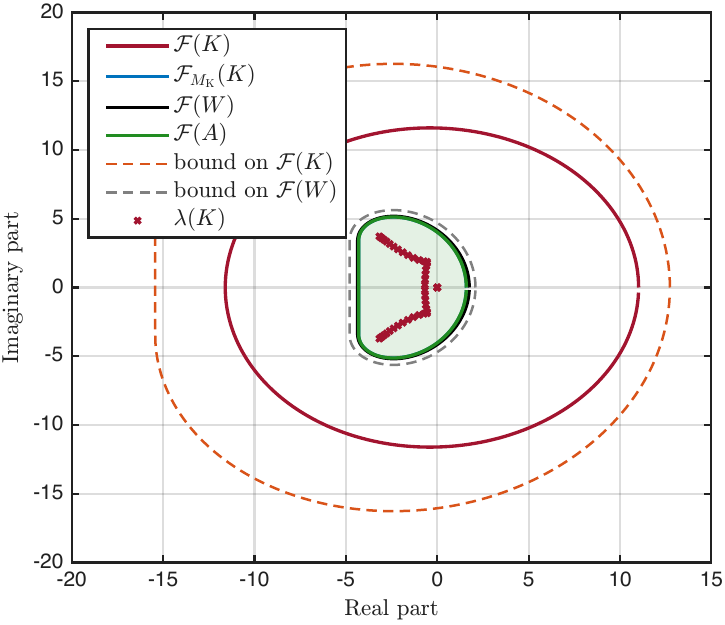}
	\end{subfigure}
	\hfill
	\begin{subfigure}{0.47\linewidth}
		\centering
		\includegraphics[width=\linewidth]{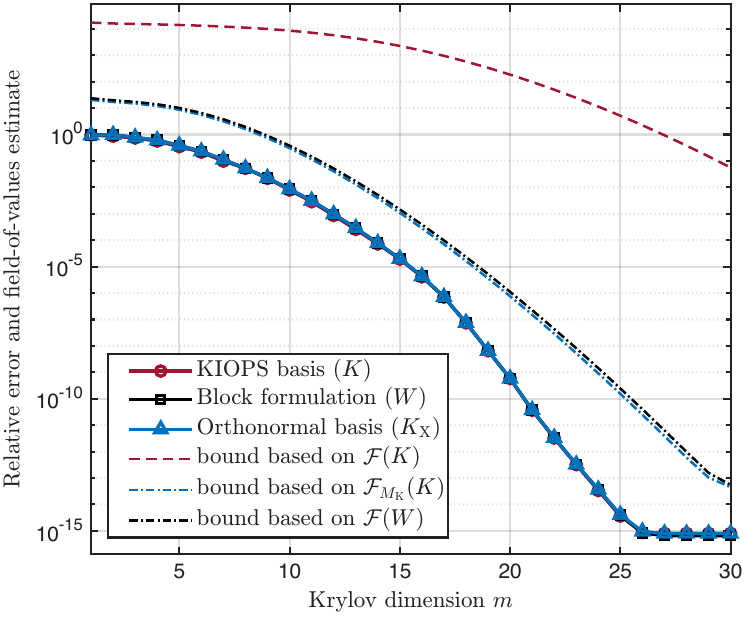}
	\end{subfigure}
	\caption{Shifted Grcar matrix, a nonnormal Toeplitz test problem.}
	\label{fig:test-grcar}
\end{figure}

\paragraph{Example 3}
Our third example uses the Grcar matrix~\cite{grcar89}, a standard nonnormal Toeplitz example in the study of pseudospectra and behavior of nonnormal matrices~\cite{benz20, tref92, trem05}, especially for demonstrating that eigenvalue information alone can be misleading for the convergence of Krylov methods.
The test matrix we use is generated by $\texttt{gallery('grcar',40)-2*eye(40)}$, where the shift by $-2I$ keeps the eigenvalues in the left half-plane after scaling, yet the field of values still protrudes substantially farther to the right than the spectrum. 

As we can see from~\Cref{fig:test-grcar}, in the left panel, $\F(A)$ is already two-dimensional and visibly nonnormal, but $\F(W)$ and $\F_{\MK}(K)$ still remain close to each other, whereas $\F(K)$ is much larger. 
In the right panel, the error decreases more gradually than in the previous Hermitian examples, and the metric and the larger block bounds reproduce this slower slope of convergence far better than the bound based on $\F(K)$.

\begin{figure}[t]
	\centering
	\begin{subfigure}{0.47\linewidth}
		\centering
		\includegraphics[width=\linewidth]{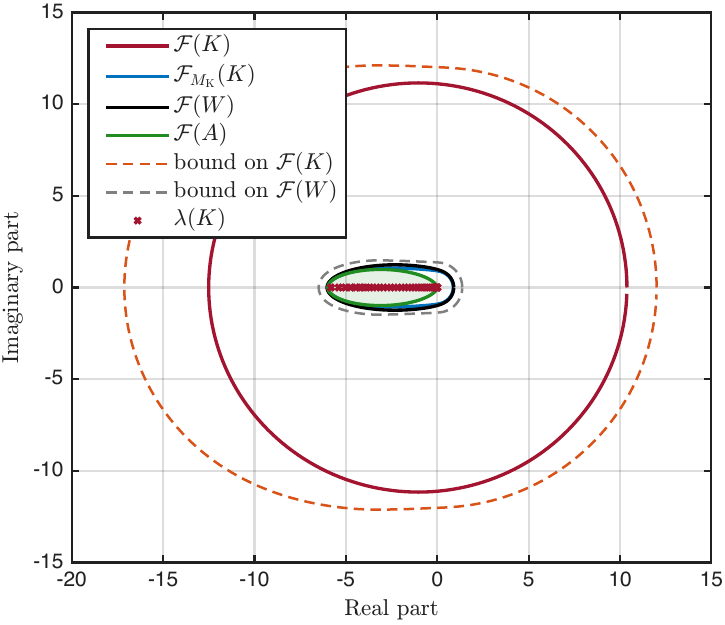}
	\end{subfigure}
	\hfill
	\begin{subfigure}{0.47\linewidth}
		\centering
		\includegraphics[width=\linewidth]{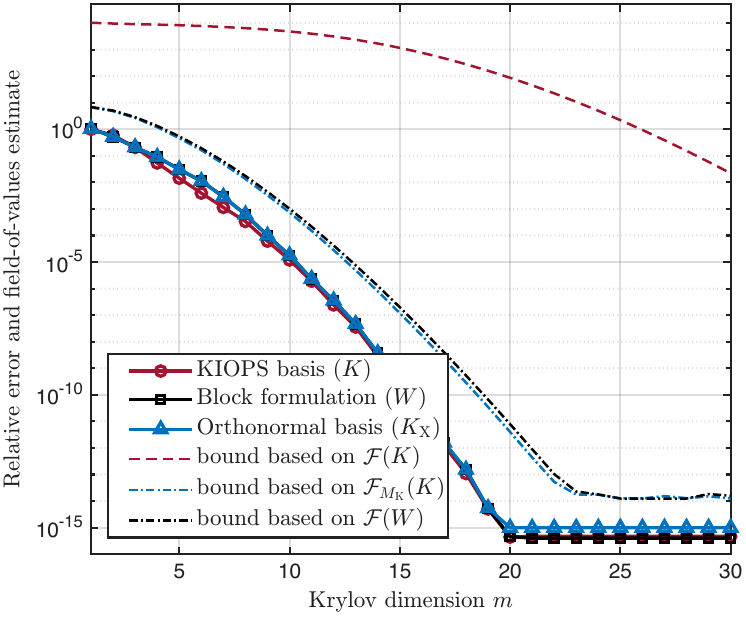}
	\end{subfigure}
	\caption{Negative Dorr matrix, a nonsymmetric ill-conditioned tridiagonal test problem.}
	\label{fig:test-dorr}
\end{figure}

\paragraph{Example 4}
The fourth example uses the negative of the Dorr matrix~\cite{dorr71}, generated by $\texttt{-gallery('dorr',n)}$. This matrix is tridiagonal, row diagonally dominant, and ill-conditioned, presenting a finite-difference-type but nonsymmetric matrix from a singular perturbation problem.
Although the eigenvalues of $A$ lie on the negative real axis, the nonsymmetry makes $\F(A)$ two-dimensional and allows part of $\F(A)$ to cross the imaginary axis. 
We are therefore testing whether the error bounds respond to the rightmost extent of the field of values, a feature absent from the Hermitian examples, where $\F(A)$ is just the convex hull of the eigenvalues.

In~\Cref{fig:test-dorr}, despite this non-Hermitian field-of-values geometry, the field-of-values and Krylov convergence plots show trends rather similar to those observed in the Poisson example.

\begin{figure}[t]
	\centering
	\begin{subfigure}{0.47\linewidth}
		\centering
		\includegraphics[width=\linewidth]{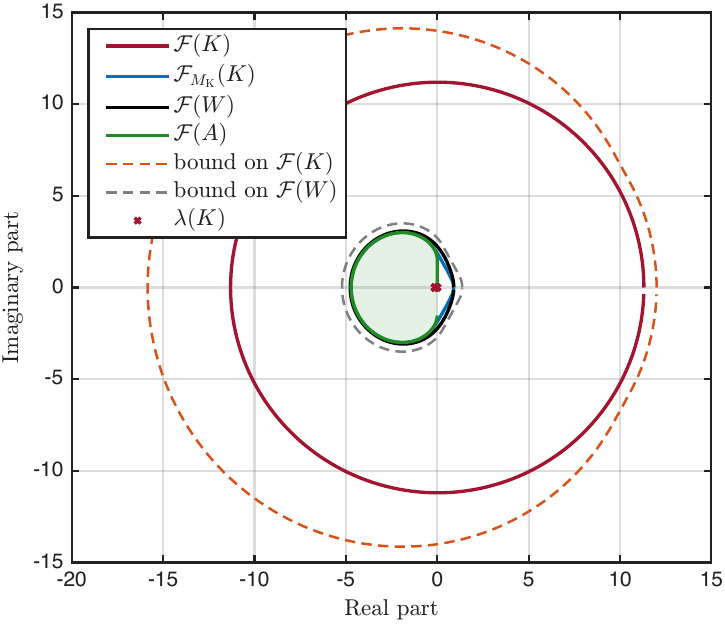}
	\end{subfigure}
	\hfill
	\begin{subfigure}{0.47\linewidth}
		\centering
		\includegraphics[width=\linewidth]{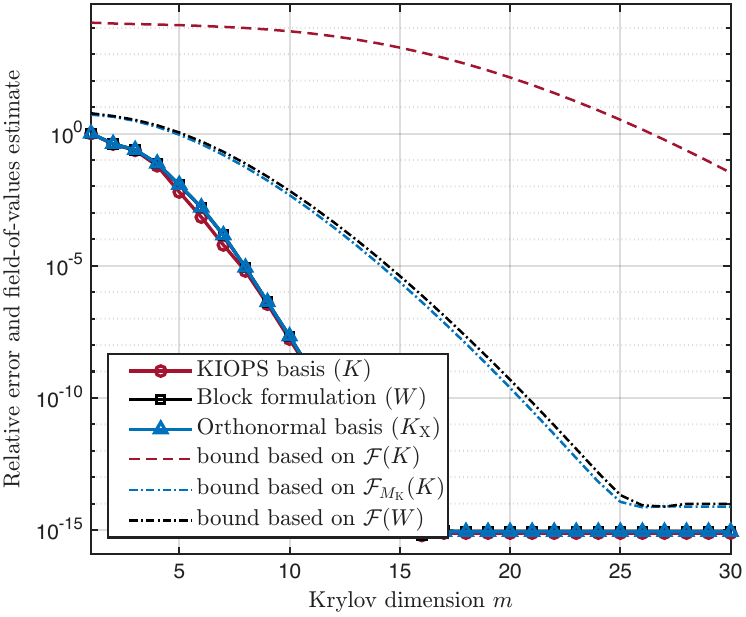}
	\end{subfigure}
	\caption{Shifted triangular Toeplitz matrix with strongly nonnormal behavior.}
	\label{fig:test-triw}
\end{figure}

\paragraph{Example 5}
The final experiment concerns a shifted version of the triangular Toeplitz matrix discussed by Kahan~\cite{kaha66} and by Golub and Wilkinson~\cite{gowi76}, generated via the command $\texttt{gallery('triw',40)-1.5*eye(40)}$.
After scaling, all eigenvalues of $A$ coincide at near $-0.12$ on the negative real axis, while the field-of-values extends to a much larger region and comes close to the imaginary axis. 
This presents a difficult nonnormal case in which the spectrum gives little information about the geometry relevant to field-of-values and pseudospectral convergence estimates~\cite{benz20, tref92, waye17}. 

The results in~\Cref{fig:test-triw} show that, even for this highly nonnormal example, $\F(W)$ and $\F_{\MK}(K)$ are again only slight enlargements of $\F(A)$ and are well approximated by the enclosure in~\eqref{eq:fov-W}, whereas $\F(K)$ is very pessimistic.
On the other hand, the error of the Krylov methods drops rapidly to unit round-off level, while the bounds based on $\F(W)$ and $\F_{\MK}(K)$ are too conservative, not able to accurately capture the slope of convergence.
The $\F(K)$-based bound is again dominated by the enlargement caused by using the ordinary Euclidean inner product on the nonorthogonal KIOPS coordinate basis.

\subsection{Sensitivity to right-hand-side scaling and collinearity}\label{subsect:rhs_sens}

The second set of experiments tests the dependence of the field of values and convergence bounds on the scaling and collinearity of the right-hand side vectors.
We use the Poisson and Grcar matrices previously used in the experiments of~\Cref{subsect:five_examples}, representing the Hermitian and nonnormal cases, respectively.
For each matrix, a single random realization of $q$ and the vectors $r_j$ of~\eqref{eq:bj} is generated and then reused throughout the sensitivity tests, so that the observed changes are due only to the prescribed values of $\beta$ and $\delta$.
In the sensitivity test on $\delta$, $\beta=10$ is fixed and $\delta$ is chosen as $\{0, 10^{-3}, 10^{-1}, 10\}$, while in the sensitivity test on $\beta$, we set $\delta=10^{-1}$ throughout and $\beta$ takes values from $\{1, 4, 10, 25\}$. 

\begin{figure}[t]
	\centering
	\begin{subfigure}{0.48\linewidth}
		\centering
		\includegraphics[width=\linewidth]{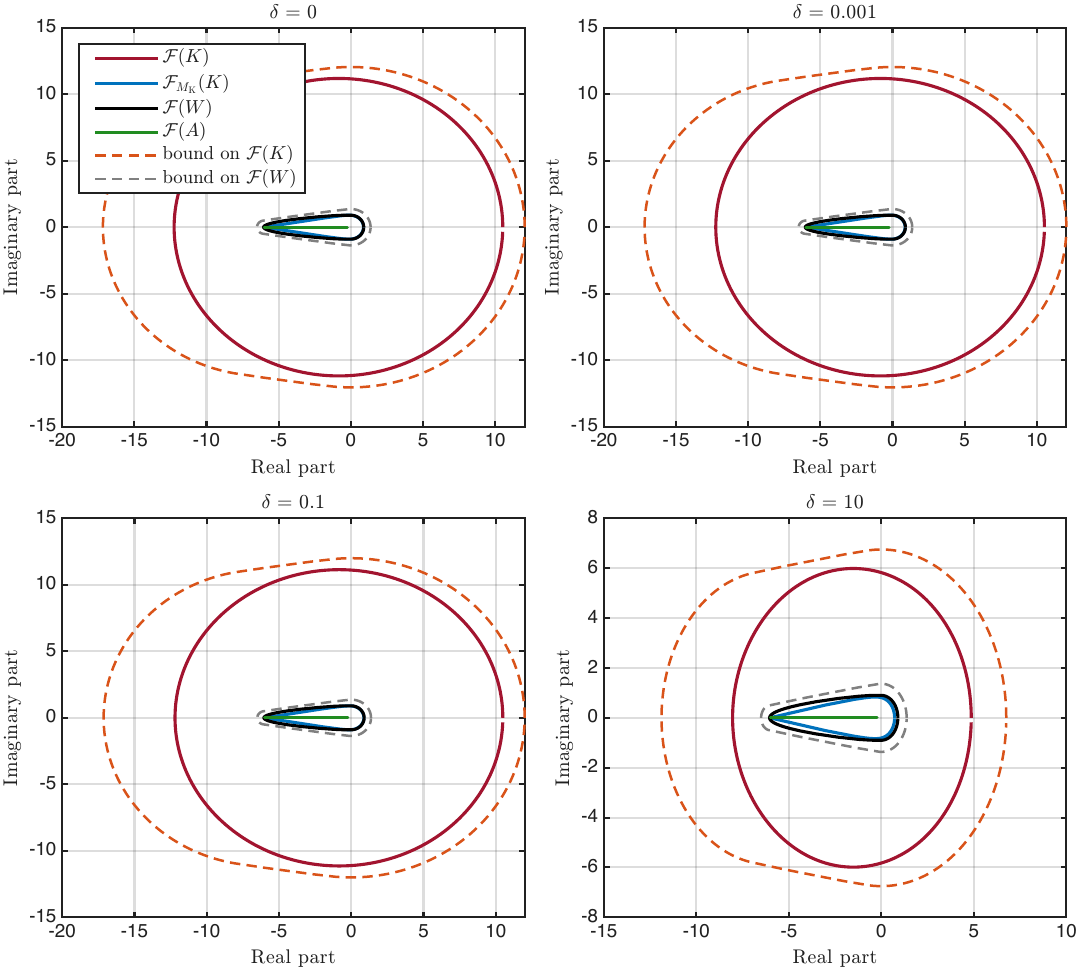}
	\end{subfigure}
	\hfill
	\begin{subfigure}{0.48\linewidth}
		\centering
		\includegraphics[width=\linewidth]{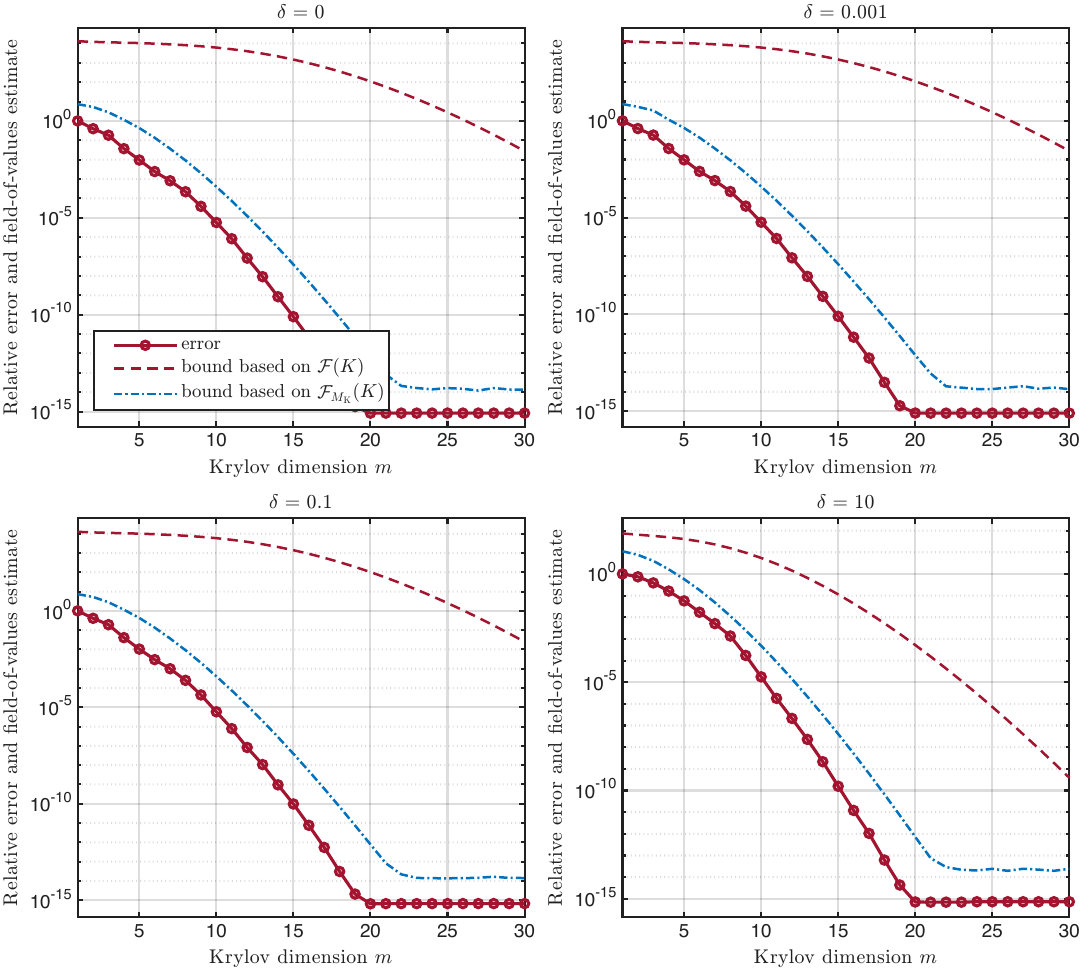}
	\end{subfigure}
	\caption{Right-hand-side sensitivity test for the Poisson matrix with fixed $\beta=10$ and varying $\delta$.}
	\label{fig:rhs-sens-poisson-delta}
\end{figure}

\begin{figure}
	\centering
	\begin{subfigure}{0.48\linewidth}
		\centering
		\includegraphics[width=\linewidth]{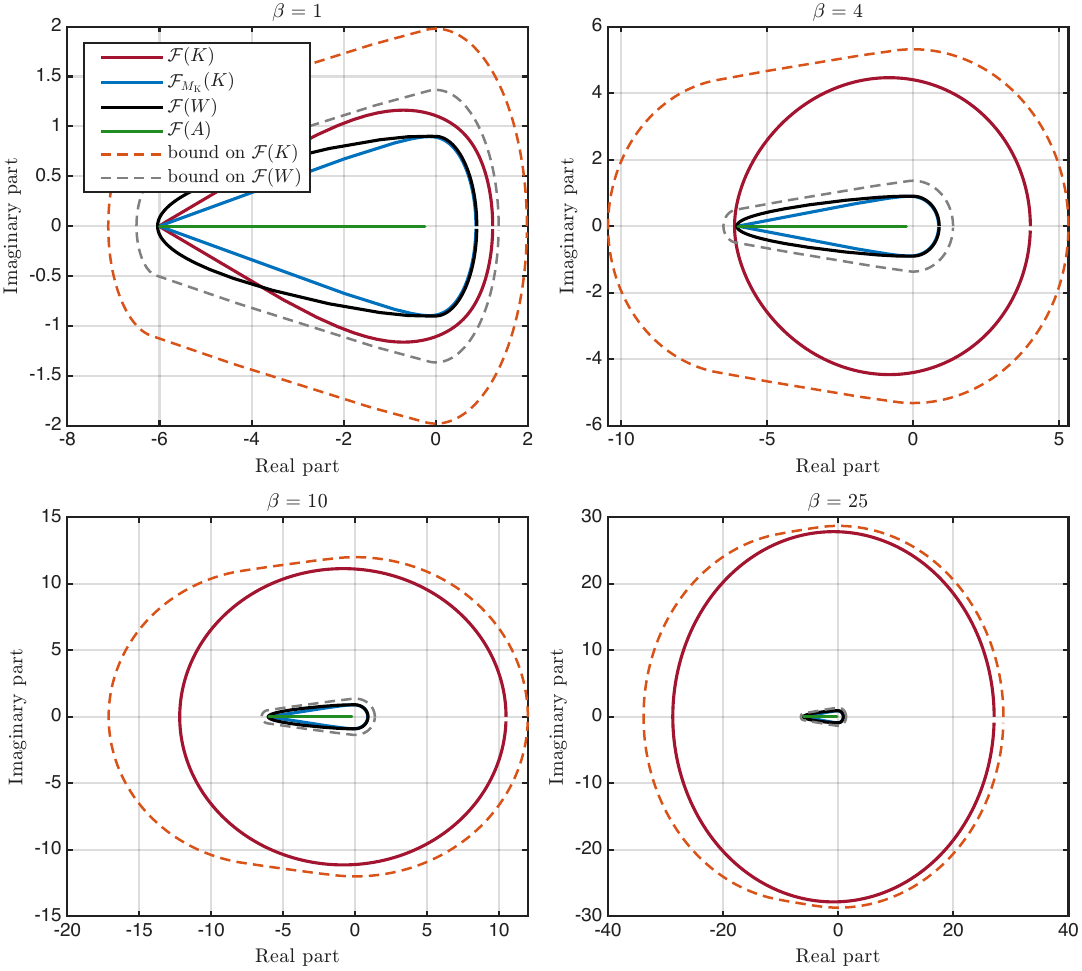}
	\end{subfigure}
	\hfill
	\begin{subfigure}{0.48\linewidth}
		\centering
		\includegraphics[width=\linewidth]{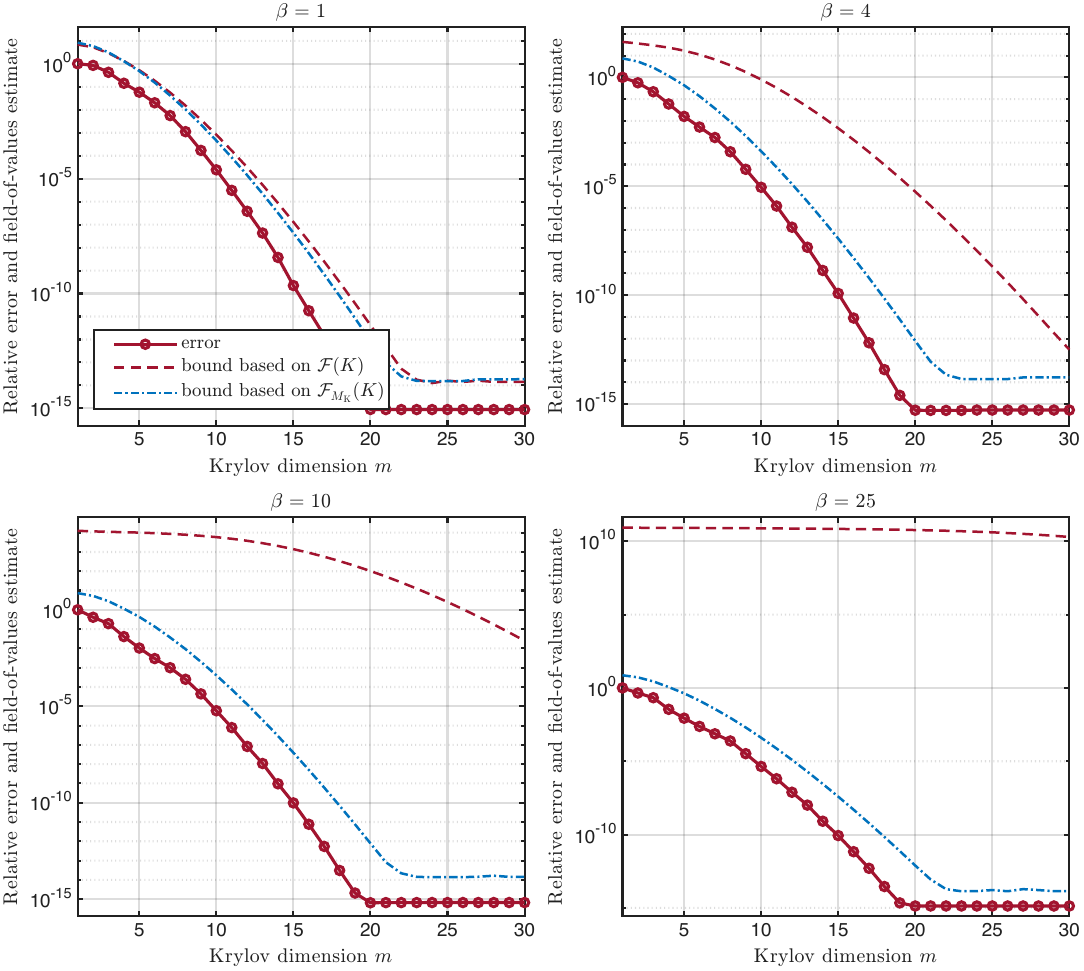}
	\end{subfigure}
	\caption{Right-hand-side sensitivity test for the Poisson matrix with fixed $\delta=10^{-1}$ and varying $\beta$.}
	\label{fig:rhs-sens-poisson-beta}
\end{figure}

For the Poisson matrix,~\Cref{fig:rhs-sens-poisson-delta,fig:rhs-sens-poisson-beta} show the effect of the right-hand side parameters in a setting where $\F(A)$ is an interval.
When $\delta$ is varied with $\beta=10$ fixed, the vectors $b_j$ move from an exactly collinear configuration at $\delta=0$ to increasingly separated directions, and we have $\normt{B}\leq \normF{B}=\sqrt{s}\beta \approx 22.36$.
We found that $\sigma_{\min}(\XK)\approx 5.21$ and $\sigma_{\max}(\XK)\approx 35.13$ for $\delta=0$ and $\delta=10^{-3}$, but this singular value gap shrinks to approximately $[9.91,22.41]$ for $\delta=10$.
This explains the slight contraction of the Euclidean set $\F(K)$ as $\delta$ increases, shown on the left panel of~\Cref{fig:rhs-sens-poisson-delta}. By~\Cref{thm:basis-metric-distortion}, the reduction in the extremal singular values makes the metric field-of-values less anisotropic and more aligned to the standard Euclidean metric.
In all cases, the metric set $\F_{\MK}(K)$ remains close to the block field of values $\F(W)$.
This varying field-of-values behavior is reflected in the convergence plot, where the error bound based on $\F(K)$ is less pessimistic as $\delta$ increases, although it remains a few orders of magnitude above the observed error as the Krylov dimension grows.

When $\beta$ is varied with $\delta = 0.1$ fixed,~\Cref{fig:rhs-sens-poisson-beta} showcases a much more dramatic effect. In this case, increasing $\beta$ from $1$ to $25$ changes $\normt{B}$ linearly from about $2.23$ to about $55.68$ and scales $[\sigma_{\min}(\XK),\sigma_{\max}(\XK)]$ from approximately $[0.54,3.50]$ to $[13.59,87.54]$.
The Euclidean field of values $\F(K)$ and its enclosure in~\eqref{eq:fov-K} become increasingly pessimistic as the right-hand-side scaling factor $\beta$ increases. Correspondingly, the convergence bound based on $\F(K)$ rapidly fails to provide any useful prediction of the true error.
In contrast, the metric set $\F_{\MK}(\beta)$ shown in the plots is insensitive to the scaling; this is the plotted version of the scaled-metric invariance in~\eqref{eq:fov-MK-K-beta}, with $K$ and $\MK$ understood for the current scaled data.

\begin{figure}[t]
	\centering
	\begin{subfigure}{0.48\linewidth}
		\centering
		\includegraphics[width=\linewidth]{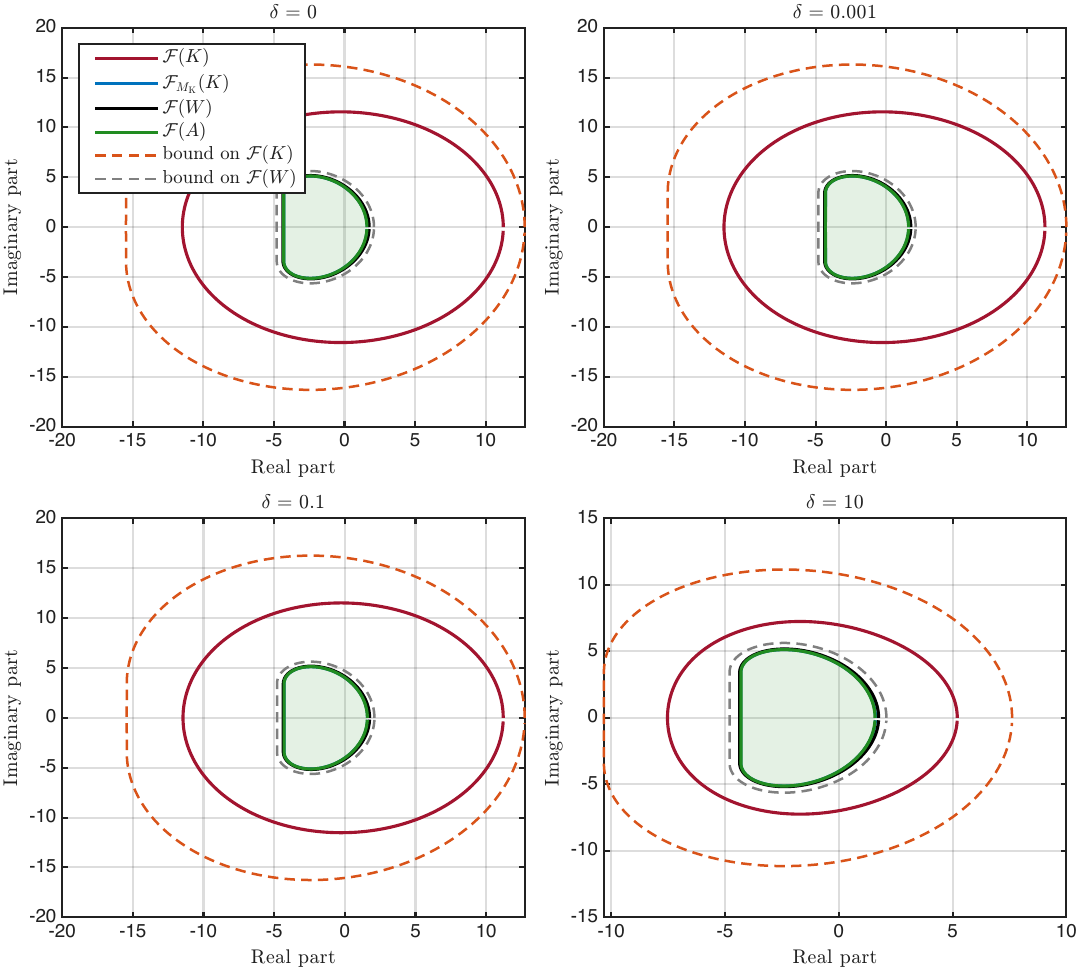}
	\end{subfigure}
	\hfill
	\begin{subfigure}{0.48\linewidth}
		\centering
		\includegraphics[width=\linewidth]{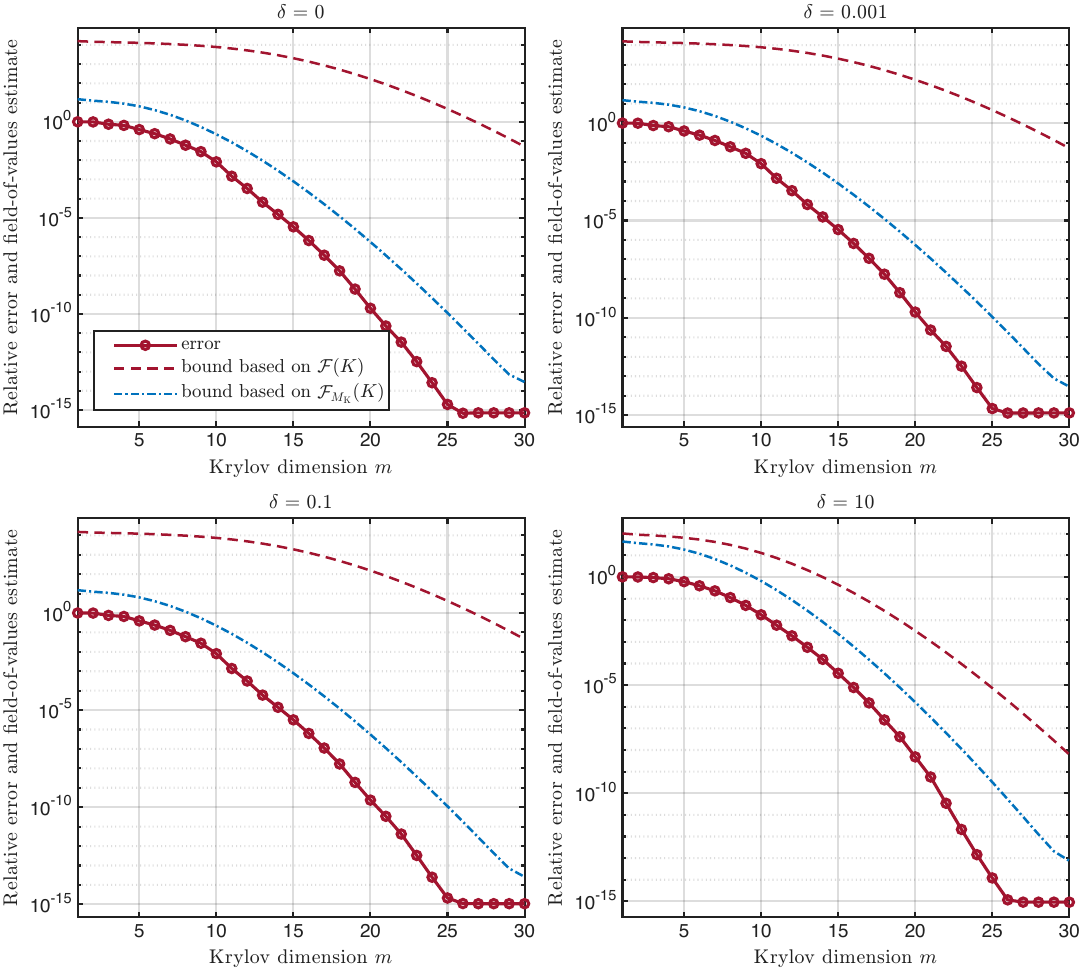}
	\end{subfigure}
	\caption{Right-hand-side sensitivity test for the Grcar matrix with fixed $\beta=10$ and varying $\delta$.}
	\label{fig:rhs-sens-grcar-delta}
\end{figure}

\begin{figure}[t]
	\centering
	\begin{subfigure}{0.48\linewidth}
		\centering
		\includegraphics[width=\linewidth]{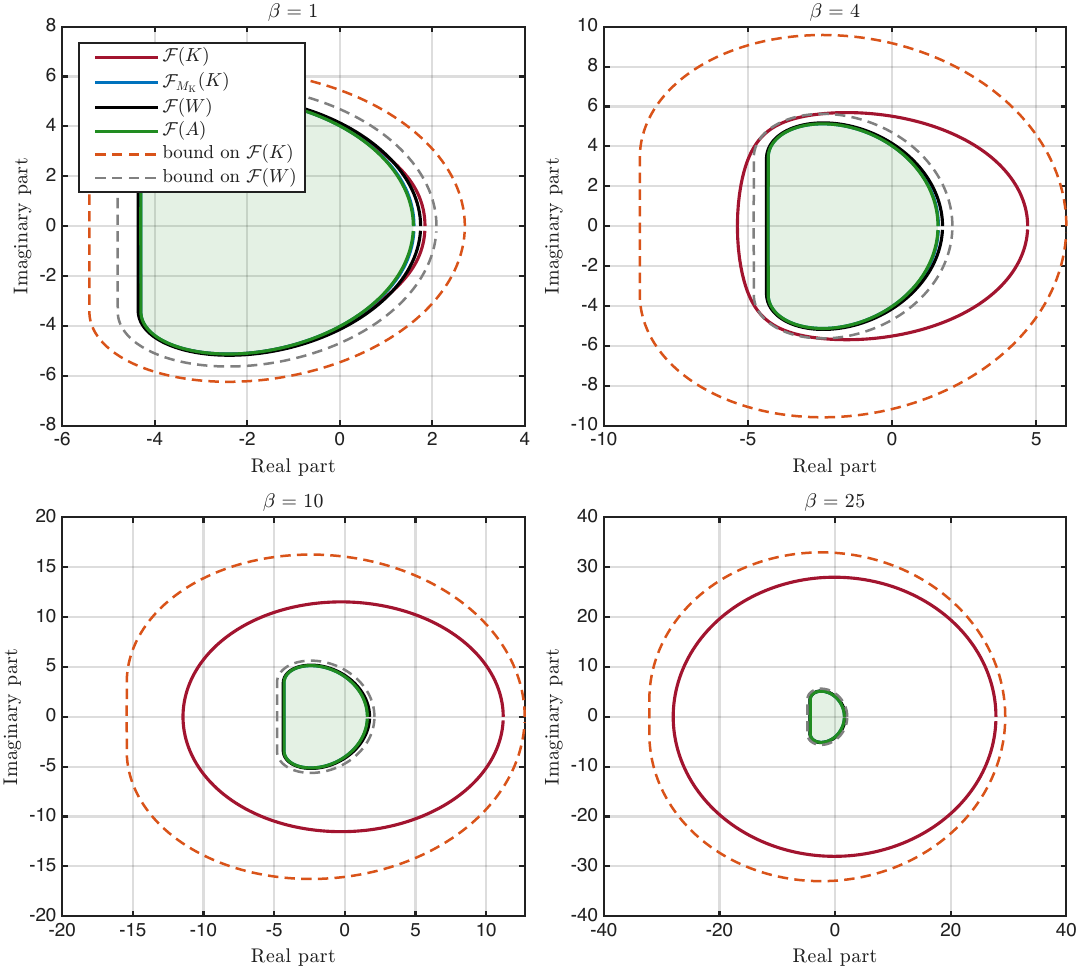}
	\end{subfigure}
	\hfill
	\begin{subfigure}{0.48\linewidth}
		\centering
		\includegraphics[width=\linewidth]{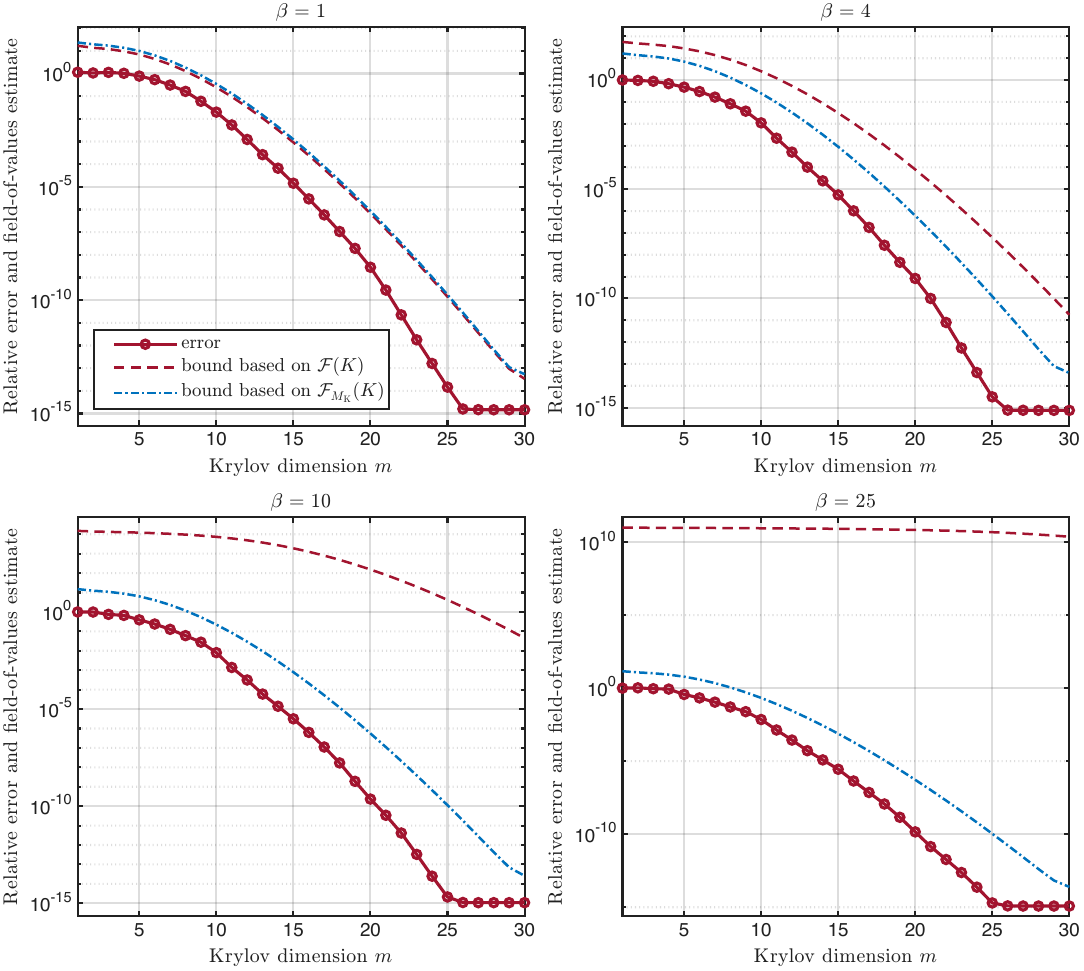}
	\end{subfigure}
	\caption{Right-hand-side sensitivity test for the Grcar matrix with fixed $\delta=10^{-1}$ and varying $\beta$.}
	\label{fig:rhs-sens-grcar-beta}
\end{figure}

The sensitivity tests on the Grcar matrix, presented in~\Cref{fig:rhs-sens-grcar-delta,fig:rhs-sens-grcar-beta}, confirm the same mechanism in the presence of nonnormal $A$, and a two-dimensional $\F(A)$.
The metric artifact in the Euclidean coordinate representation is exposed mainly by right-hand-side scaling $\beta$, and to a lesser extent by the collinearity of the right-hand-side vectors, controlled by $\delta$ in the experiments. 
As this scaling increases, $\F(K)$ and the associated convergence bound deteriorate, whereas the associated metric field of values $\F_{\MK}(K)$ stays tied to that of the block formulation~\eqref{eq:aug-mat-blk} and yields improved Krylov error bounds.

A somewhat surprising observation from \Cref{fig:rhs-sens-grcar-beta} is that for $\beta = 1$, the bound based on $\mathcal{F}(K)$ lies slightly \emph{below} the bound based on the metric field of values, despite the latter set being a subset of the former. This is due to the fact that the prefactor in the bound based on $\F(K)$ involves $\normt{c} = \sqrt{\normt{b_0}^2+1}$, while the bound based on the metric field of values involves the possibly larger constant $\normt{b} = \sqrt{\normt{b_0}^2+\normt{\bb}^2}$. In a situation like here, where the metric field of values is only very slightly smaller than $\F(K)$, this prefactor can thus make a difference in determining which of the bounds is tighter.

\section{Conclusions}\label{sect:conclusions}
We have revisited Krylov subspace methods for computing linear combinations of matrix $\varphi$-function actions via the exponential of an augmented matrix, focusing on the established KIOPS algorithm~\cite{grt18} in light of the recent block triangular formulation developed in~\cite{alli26}.
We showed that the KIOPS matrix $K$ in~\eqref{eq:aug-mat-kiops} is the coordinate representation of the block matrix $W$ of~\eqref{eq:aug-mat-blk} restricted to the invariant subspace $\C^n\oplus\mathcal K_s(J,\bb)$ and identified an embedding relation between the Krylov spaces built with $K$ and $W$; see~\Cref{thm:krylov_W_KX} and~\Cref{cor:exponential_error}. 
Thus, for every polynomial $p$, applying $p(K)$ and $p(W)$ to the respective starting vectors gives the same result after projection onto the first $n$ components.

Viewing the basis produced by an augmentation scheme such as KIOPS as a coordinate representation of the larger block formulation motivates the use of a different inner product, and hence a different metric for the field of values.
We showed that the difference between this \emph{induced metric} and the (ordinary) Euclidean metric is governed by the singular values of the basis matrix; see~\Cref{thm:basis-metric-distortion}.
For the KIOPS basis, this implies that scaling the input vectors $b_j$ can substantially change the metric distortion and hence affects the associated Krylov error bound based on the Euclidean metric; see~\Cref{cor:rhs-scaling-metric-distortion}. In comparison, the error bounds based on the corresponding scaled metric field of values in~\eqref{eq:fov-MK-K-beta} are invariant under this scaling.
We then proved that, for a general basis matrix $X$, the metric field of values $\F_{\MX}(\KX)$ is contained in $\F(W)$, the Euclidean field of values of $W$; see~\Cref{lem:metric_fov_kx}. 
This leads to much better convergence bounds than those based on the Euclidean field of values $\F(K)$, which depends on the input vectors $b_j$.
When the input matrix $A$ is negative semidefinite, we derived an easily computable bound for best polynomial approximation on $\F(W)$ via an enclosing Bernstein ellipse and the associated truncated Chebyshev series estimate; see~\Cref{pro:enclosing_ellipse}.

Numerical experiments on various types of problems using a set of different parameters confirmed our analyses and showed that the bounds based on $\F(W)$ and $\F_{\MK}(K)$ lie very close together and almost always present a substantial improvement over the bounds obtained from $\F(K)$.
In most cases, the bounds based on $\F(W)$ and $\F_{\MK}(K)$ captured the convergence behavior closely, indicating that these spectral sets, in fact, appear to be the minimal regions of the complex plane governing the convergence behavior of the Krylov method, confirming the value of the analysis conducted in~\Cref{sect:basis-framework,sect:basis-metric,sect:fov}, which reflects the intrinsic block geometry inherited from $W$.
The new bounds seem to be suboptimal only on strongly nonnormal test examples, for which improved analyses and tighter bounds remain directions of further study.

\section*{Acknowledgment}
Both authors are members of the scientific network ``The $f(A)b$ulous scientific network on matrix functions and exponential integrators'', supported by Deutsche Forschungsgemeinschaft (DFG, German Research Foundation) under project number 566049107.

\appendix
\crefalias{section}{appendix}
\section{Numerically estimating the polynomial approximation error on the field of values}\label{appendix:poly_error}
In order to estimate~$E_{m-1}(\e^\zeta, \Omega)$ for some compact set $\Omega$ when evaluating the respective error bounds, we use the following approach: For any polynomial $p$, the function $p(\zeta)-\e^\zeta$ is holomorphic, so it attains its maximum modulus on the boundary of $\Omega$ (since in our setting $\Omega$ is the closure of a bounded domain). Thus, we discretize the boundary, using points $\zeta_1,\ldots,\zeta_N \in \partial\Omega$. We choose 
\[
    \eta:=\frac12\left(\min_j \operatorname{Re} \zeta_j+\max_j \operatorname{Re} \zeta_j\right)
      +\frac{i}{2}\left(\min_j \operatorname{Im} \zeta_j+\max_j \operatorname{Im} \zeta_j\right),
    \quad
    \rho:=\max_{1\le j\le N}|\zeta_j-\eta|
\]
and work in the scaled variables $\xi_j:=\frac{\zeta_j-\eta}{\rho}, j=1,\ldots,N$. The approximating polynomial is represented in a discrete Arnoldi basis generated on these scaled boundary nodes,
\[
    p(\zeta)=\sum_{k=0}^{m-1} \gamma_k q_k\!\left(\frac{\zeta-\eta}{\rho}\right),
    \quad
    Q_{j,k+1}=q_k(\xi_j),
\]
so that $Q\in\mathbb C^{N\times m}$ and $\gamma\in\mathbb C^m$. We collect the polynomial approximation errors $p(\zeta_i)-\e^{\zeta_i}$ in the vector $r(\gamma):=Q\gamma-x$, where $x=[\e^{\zeta_1},\dots,\e^{\zeta_N}]^T$. The exact minimax problem on the finite grid can then be written as
\[
    E_N := \min_{\gamma\in\mathbb C^m} \max_{1\le j\le N} \left| \sum_{k=0}^{m-1} \gamma_k q_k(\xi_j)-\e^{\zeta_j} \right| = \min_{\gamma\in\mathbb C^m} \max_{1\le j\le N} |r_j(\gamma)|,
\]
which is a discrete analogue of the continuous minimax problem on $\partial\Omega$. Equivalently, in epigraph form,
\[
\begin{aligned}
    E_N = \min_{\gamma\in\mathbb C^m,\,\Theta\in\mathbb R}\quad & \Theta,\\
    \text{subject to}\quad & |r_j(\gamma)|\le \Theta, \qquad j=1,\ldots,N .
\end{aligned}
\]
For fixed $\gamma$, the smallest admissible value of $\Theta$ is precisely $\max_j |r_j(\gamma)|$. 

Each constraint $|r_j(\gamma)|\le\Theta$ describes a disk in the complex plane, or equivalently a two-dimensional second-order cone constraint. To obtain a linear program, we approximate this disk by the $L$ half-plane constraints defining a regular circumscribed polygon. Thus,
with
\[
    \theta_\ell=\frac{2\pi\ell}{L}, \qquad \ell=0,\ldots,L-1,
\]
we impose
\[
    \operatorname{Re}\!\left(\e^{-i\theta_\ell}r_j(\gamma)\right) \le \Theta, \qquad j=1,\ldots,N,\quad \ell=0,\ldots,L-1 .
\]
This gives the real linear program
\[
\begin{aligned}
    \widehat E_{N,L}
    :=
    \min_{\gamma\in\mathbb C^m,\,\Theta\in\mathbb R}\quad & \Theta,\\
    \text{subject to}\quad
    &
    \operatorname{Re}\!\left(
        \e^{-i\theta_\ell}
        \left[
            \sum_{k=0}^{m-1}\gamma_k q_k(\xi_j)-\e^{\zeta_j}
        \right]
    \right)
    \le \Theta,\\
    &
    \qquad\qquad\qquad j=1,\ldots,N,\quad \ell=0,\ldots,L-1 .
\end{aligned}
\]
After splitting $\gamma$ into real and imaginary parts, all constraints are linear over the real variables. Since the circumscribed polygon has inradius $\Theta$ and circumradius $\frac{\Theta}{\cos(\pi/L)}$, the solution $\widehat\gamma$ of the linear program satisfies
\[
    \widehat E_{N,L} \leq E_N \leq \max_{1\le j\le N}|r_j(\widehat\gamma)| \leq \frac{\widehat E_{N,L}}{\cos(\pi/L)}.
\]
Thus the polyhedral approximation changes the discrete minimax value by at most the factor $\frac{1}{\cos(\pi/L)}$, which is close to one for the values of $L$ used in the computations.

To improve accuracy, the linear program is solved in a residual-correction form. Starting from a least-squares polynomial, we repeatedly solve a linear program for a correction to the current residual, rescaling the residual at each step to keep the linear program well conditioned. After solving on the current grid, the resulting polynomial is evaluated on a much finer boundary grid; points where the largest errors occur are added to the optimization grid, and the process is repeated. The final reported value is the maximum error of the computed polynomial on the fine validation grid.

The linear programs are solved in MATLAB with the built-in function \texttt{linprog} from the Optimization Toolbox. By default, we call \texttt{linprog} with the \texttt{dual-simplex} algorithm; in case of failure for a particular instance, the code retries with the \texttt{interior-point} algorithm. 

\bibliographystyle{myplain2-doi}
\bibliography{notebib}

\begin{thebibliography}{10}

\bibitem{almo25}
Awad~H. Al-Mohy.
\newblock \href{https://arxiv.org/abs/2509.26475}{Computing linear combinations
  of $\varphi$-function actions for exponential integrators}.
\newblock {ArXiv}:2509.26475 [math.{NA}], September 2025.

\bibitem{almo25a}
Awad~H. Al-Mohy.
\newblock \href{https://doi.org/10.3390/math13243985}{Shared-pole
  carath{\'e}odory--fej{\'e}r approximations for linear combinations of
  $\varphi$-functions}.
\newblock {\em Mathematics}, 13\penalty0 (24):\penalty0 3985, 2025.

\bibitem{alhi09}
Awad~H. Al-Mohy and Nicholas~J. Higham.
\newblock \href{https://doi.org/10.1137/09074721X}{A new scaling and squaring
  algorithm for the matrix exponential}.
\newblock {\em SIAM J. Matrix Anal. Appl.}, 31\penalty0 (3):\penalty0 970--989,
  2009.

\bibitem{alhi11}
Awad~H. Al-Mohy and Nicholas~J. Higham.
\newblock \href{https://doi.org/10.1137/100788860}{Computing the action of the
  matrix exponential, with an application to exponential integrators}.
\newblock {\em SIAM J. Sci. Comput.}, 33\penalty0 (2):\penalty0 488--511, 2011.

\bibitem{alli26}
Awad~H. Al-Mohy and Xiaobo Liu.
\newblock \href{https://doi.org/10.1137/25M1765262}{A scaling and recovering
  algorithm for the matrix $\varphi$-functions}.
\newblock {\em SIAM J. Sci. Comput.}, 48\penalty0 (2):\penalty0 A726--A747,
  2026.

\bibitem{arno51}
W.~Edwin Arnoldi.
\newblock The principle of minimized iterations in the solution of the matrix
  eigenvalue problem.
\newblock {\em Q. Appl. Math.}, 9\penalty0 (1):\penalty0 17--29, 1951.

\bibitem{bere09}
Bernhard Beckermann and Lothar Reichel.
\newblock \href{https://doi.org/10.1137/080741744}{Error estimates and
  evaluation of matrix functions via the {F}aber transform}.
\newblock {\em SIAM J. Numer. Anal.}, 47\penalty0 (5):\penalty0 3849–3883,
  2009.

\bibitem{benz20}
Michele Benzi.
\newblock \href{https://doi.org/10.1007/s40574-020-00249-2}{Some uses of the
  field of values in numerical analysis}.
\newblock {\em Boll. Unione Mat. Ital.}, 14:\penalty0 159--177, 2021.

\bibitem{bkt21}
Mike~A. Botchev, Leonid~A. Knizhnerman, and Eugene~E. Tyrtyshnikov.
\newblock \href{https://doi.org/10.1137/20m1375383}{Residual and restarting in
  {Krylov} subspace evaluation of the $\varphi$ function}.
\newblock {\em SIAM J. Sci. Comput.}, 43\penalty0 (6):\penalty0 A3733–A3759,
  2021.

\bibitem{ccz23}
Marco Caliari, Fabio Cassini, and Franco Zivcovich.
\newblock \href{https://doi.org/10.1016/j.cam.2022.114973}{{BAMPHI}:
  Matrix-free and transpose-free action of linear combinations of
  $\varphi$-functions from exponential integrators}.
\newblock {\em J. Comput. Appl. Math.}, 423:\penalty0 114973, 2023.

\bibitem{coma02}
S.M. Cox and P.C. Matthews.
\newblock \href{https://doi.org/10.1006/jcph.2002.6995}{Exponential time
  differencing for stiff systems}.
\newblock {\em J. Comput. Phys.}, 176\penalty0 (2):\penalty0 430--455, 2002.

\bibitem{crou07}
Michel Crouzeix.
\newblock \href{https://doi.org/10.1016/j.jfa.2006.10.013}{Numerical range and
  functional calculus in {Hilbert} space}.
\newblock {\em J. Funct. Anal.}, 244\penalty0 (2):\penalty0 668--690, 2007.

\bibitem{crpa17}
Michel Crouzeix and C{\'e}sar Palencia.
\newblock \href{https://doi.org/10.1137/17M1116672}{The numerical range is a
  $(1+\sqrt{2})$-spectral set}.
\newblock {\em SIAM J. Matrix Anal. Appl.}, 38\penalty0 (2):\penalty0 649--655,
  2017.

\bibitem{det23}
Pranab~J. Deka, Lukas Einkemmer, and Mayya Tokman.
\newblock \href{https://doi.org/10.1016/j.softx.2022.101302}{{LeXInt}:
  {Package} for exponential integrators employing {Leja} interpolation}.
\newblock {\em SoftwareX}, 21:\penalty0 101302, 2023.

\bibitem{dorr71}
Fred~W. Dorr.
\newblock \href{https://doi.org/10.1090/S0025-5718-1971-0297142-0}{An example
  of ill-conditioning in the numerical solution of singular perturbation
  problems}.
\newblock {\em Math. Comput.}, 25\penalty0 (114):\penalty0 271--283, 1971.

\bibitem{grt18}
St{\'e}phane Gaudreault, Greg Rainwater, and Mayya Tokman.
\newblock \href{https://doi.org/10.1016/j.jcp.2018.06.026}{{KIOPS}: {A} fast
  adaptive {Krylov} subspace solver for exponential integrators}.
\newblock {\em J. Comput. Phys.}, 372\penalty0 (1):\penalty0 236--255, 2018.

\bibitem{give52}
Wallace Givens.
\newblock \href{https://doi.org/10.1090/S0002-9939-1952-0047004-9}{Fields of
  values of a matrix}.
\newblock {\em Proc. Amer. Math. Soc.}, 3:\penalty0 206--209, 1952.

\bibitem{gowi76}
Gene~H. Golub and James~H. Wilkinson.
\newblock Ill-conditioned eigensystems and the computation of the {Jordan}
  canonical form.
\newblock {\em SIAM Rev.}, 18\penalty0 (4):\penalty0 578--619, 1976.

\bibitem{grcar89}
Joseph~F. Grcar.
\newblock Operator coefficient methods for linear equations.
\newblock Technical Report SAND89-8691, Sandia National Laboratories,
  Albuquerque, NM, 1989.

\bibitem{gura97}
Karl~E. Gustafson and Duggirala K.~M. Rao.
\newblock \href{http://doi.org/10.1007/978-1-4613-8498-4}{{\em Numerical Range:
  The Field of Values of Linear Operators and Matrices}}.
\newblock Springer, New York, 1997.
\newblock xiv+190 pp.
\newblock ISBN 978-1-4613-8498-4.

\bibitem{gusc21}
Stefan Güttel and Marcel Schweitzer.
\newblock \href{https://doi.org/10.1137/20m1351072}{A comparison of
  limited-memory {Krylov} methods for {Stieltjes} functions of {Hermitian}
  matrices}.
\newblock {\em SIAM J. Matrix Anal. Appl.}, 42\penalty0 (1):\penalty0 83–107,
  2021.

\bibitem{hade92}
Uffe Haagerup and Pierre de~la Harpe.
\newblock The numerical radius of a nilpotent operator on a {Hilbert} space.
\newblock {\em Proc. Amer. Math. Soc.}, 115\penalty0 (2):\penalty0 371--379,
  1992.

\bibitem{high:FM}
Nicholas~J. Higham.
\newblock \href{http://doi.org/10.1137/1.9780898717778}{{\em Functions of
  Matrices: {Theory} and Computation}}.
\newblock SIAM, Philadelphia, PA, USA, 2008.
\newblock xx+425 pp.
\newblock ISBN 978-0-898716-46-7.

\bibitem{holu97}
Marlis Hochbruck and Christian Lubich.
\newblock \href{https://doi.org/10.1137/S0036142995280572}{On {Krylov} subspace
  approximations to the matrix exponential operator}.
\newblock {\em SIAM J. Numer. Anal.}, 34\penalty0 (5):\penalty0 1911--1925,
  1997.

\bibitem{hls98}
Marlis Hochbruck, Christian Lubich, and Hubert Selhofer.
\newblock \href{https://doi.org/10.1137/S1064827595295337}{Exponential
  integrators for large systems of differential equations}.
\newblock {\em SIAM J. Sci. Comput.}, 19\penalty0 (5):\penalty0 1552--1574,
  1998.

\bibitem{hoos10}
Marlis Hochbruck and Alexander Ostermann.
\newblock \href{https://doi.org/10.1017/S0962492910000048}{Exponential
  integrators}.
\newblock {\em Acta Numerica}, 19:\penalty0 209–286, 2010.

\bibitem{hojo13}
Roger~A. Horn and Charles~R. Johnson.
\newblock {\em Matrix Analysis}.
\newblock 2nd edition, Cambridge University Press, Cambridge, UK, 2013.
\newblock xviii+643 pp.
\newblock ISBN 978-0-521-83940-2.

\bibitem{kms53}
Mark Kac, W.~L. Murdock, and Gabor Szeg{\"o}.
\newblock \href{https://doi.org/10.1512/iumj.1953.2.52034}{On the eigen-values
  of certain {Hermitian} forms}.
\newblock {\em J. Rational Mech. Anal.}, 2\penalty0 (4):\penalty0 767--800,
  1953.

\bibitem{kaha66}
William Kahan.
\newblock Numerical linear algebra.
\newblock {\em Canad. Math. Bull.}, 9:\penalty0 757--801, 1966.

\bibitem{krsc25}
Emil Krieger and Marcel Schweitzer.
\newblock A general framework for {K}rylov {ODE} residuals with applications to
  randomized {K}rylov methods.
\newblock {arXiv}:2510.17538, 2025.

\bibitem{lanc50}
C.~Lanczos.
\newblock An iteration method for the solution of the eigenvalue problem of
  linear differential and integral operators.
\newblock {\em J.\ Res.\ Natl.\ Stand.}, 45:\penalty0 255--282, 1950.

\bibitem{maha02}
John~C. Mason and David~C. Handscomb.
\newblock \href{http://doi.org/10.1201/9781420036114}{{\em Chebyshev
  Polynomials}}.
\newblock Chapman and Hall/CRC, Boca Raton, FL, 2002.

\bibitem{miwr05}
Borislav~V. Minchev and Will~M. Wright.
\newblock \href{https://cds.cern.ch/record/848122/files/cer-002531456.pdf}{A
  review of exponential integrators for first order semi-linear problems}.
\newblock Tech. Report 2/05, Norwegian University of Science and Technology,
  Trondheim, Norway, 2005.

\bibitem{niwr12}
Jitse Niesen and Will~M. Wright.
\newblock \href{https://doi.org/10.1145/2168773.2168781}{Algorithm 919: A
  {Krylov} subspace algorithm for evaluating the $\varphi$-functions appearing
  in exponential integrators}.
\newblock {\em ACM Trans. Math. Softw.}, 38\penalty0 (3):\penalty0 22:1--22:19,
  2012.

\bibitem{penr55}
Roger Penrose.
\newblock \href{https://doi.org/10.1017/S0305004100030401}{A generalized
  inverse for matrices}.
\newblock {\em Proc. Cambridge Philos. Soc.}, 51\penalty0 (3):\penalty0
  406--413, 1955.

\bibitem{ruth52}
Daniel~Edwin Rutherford.
\newblock \href{https://doi.org/10.1017/S0080454100007111}{Some continuant
  determinants arising in physics and chemistry---{II}}.
\newblock {\em Proc. Royal Soc. Edin.}, 63\penalty0 (A):\penalty0 232--241,
  1952.

\bibitem{saad92}
Yousef Saad.
\newblock \href{https://doi.org/10.1137/0729014}{Analysis of some {Krylov}
  subspace approximations to the matrix exponential operator}.
\newblock {\em SIAM J. Numer. Anal.}, 29\penalty0 (1):\penalty0 209--228, 1992.

\bibitem{sidj98}
Roger~B. Sidje.
\newblock \href{https://doi.org/10.1145/285861.285868}{Expokit: A software
  package for computing matrix exponentials}.
\newblock {\em ACM Trans. Math. Software}, 24\penalty0 (1):\penalty0 130--156,
  1998.

\bibitem{tref92}
Lloyd~N. Trefethen.
\newblock Pseudospectra of matrices.
\newblock In {\em Numerical Analysis 1991}, D.~F. Griffiths and G.~A. Watson,
  editors, volume 260 of {\em Pitman Research Notes in Mathematics Series},
  Longman Scientific and Technical, Harlow, UK, 1992, pages 234--266.

\bibitem{trem05}
Lloyd~N. Trefethen and Mark Embree.
\newblock {\em Spectra and Pseudospectra: The Behavior of Nonnormal Matrices
  and Operators}.
\newblock Princeton University Press, Princeton, NJ, 2005.
\newblock ISBN 978-0-691-11946-5.

\bibitem{varm14}
Vipin~Kerala Varma.
\newblock \href{https://doi.org/10.48550/arXiv.1410.4932}{Conformal map and
  harmonic measure of the {B}unimovich stadium}.
\newblock Technical report, 2014.
\newblock arXiv:1410.4932.

\bibitem{waye17}
Hao Wang and Qiang Ye.
\newblock \href{https://doi.org/10.1137/16M1063733}{Error bounds for the krylov
  subspace methods for computations of matrix exponentials}.
\newblock {\em SIAM J. Matrix Anal. Appl.}, 38\penalty0 (1):\penalty0 155--187,
  2017.

\end{thebibliography}

\end{document}